\newtheorem{theorem}{Theorem}[section] 
\newtheorem{lemma}[theorem]{Lemma}     
\newtheorem{corollary}[theorem]{Corollary}
\newtheorem{proposition}[theorem]{Proposition}
\newtheorem{remark}[theorem]{Remark}
\newtheorem{definition}[theorem]{Definition}
\newtheorem*{theorem*}{Theorem}
\newtheorem*{definition*}{Definition}
\newtheorem{example}[theorem]{Example}
\begin{document}

\title[Approximate transitivity of the ergodic  action of the group of finite permutations]{Approximate transitivity of the ergodic  action of the group of finite permutations of $\mathbb{N}$ on $\{0, 1 \}^{\mathbb{N}}$}

\author{B. Mitchell Baker}
\address[B. M. Baker]{ Mathematics Department, US Naval Academy, Chauvenet Hall,
572C Holloway Road,
Annapolis, MD 21402-5002}
\email{bmb@usna.edu}
\author{Thierry Giordano}
\address[T. Giordano]{ Department of Mathematics and Statistics, University of Ottawa, Ottawa,  K1N 6N5, Canada}
\email{giordano@uottawa.ca}
\author{Radu B. Munteanu}
\address[R. B. Munteanu]{Department of Mathematics, University of Bucharest, 14 Academiei St., 010014,  
Simion Stoilow Institute of Mathematics of the  Romanian Academy, 21 Calea Grivitei Street,  010702, Bucharest,  Romania}
\email{radu-bogdan.munteanu@g.unibuc.ro}

\begin{abstract}
In this paper we show that the natural action of the symmetric
group acting on the product space $\{0, 1 \}^{\mathbb{N}} $ endowed with a symmetric measure is approximately transitive. We also extend the result to a larger class of probability measures.
\end{abstract}
\maketitle

\section*{Introduction}

In 1985, Connes and Woods  introduced in \cite{CW1} the notion of an approximately transitive (AT) action, a new ergodic property, to characterize among approximately finite dimensional (AFD) von Neumann algebras, the Araki-Woods (or ITPFI) factors. Equivalently, using Krieger's result from \cite{K} (see \cite{HO} or \cite{S} for a detailed description), their result says that a countable, ergodic, non-singular equivalence relation on a Lebesgue space is orbit equivalent to the ergodic equivalence relation induced by a product odometer if and only if its associated flow is AT.
In 1989, for a locally compact group $G$, Connes and Woods \cite{CW2} proved that the asymptotic boundary of a group invariant, time-dependent Markov random walk on $G$ is an approximately transitive, amenable $G$-space. The converse statement was proved in two steps. First, that any amenable $G$-action can be realized as the asymptotic boundary of a generalized or matrix-valued random walk on $G$ was proved in \cite{EG1} in the discrete case and in \cite{AEG} for $G$ locally compact.

Then the characterization of an approximately transitive, amenable (ergodic) $G$-space as the asymptotic boundary of a random walk was given in \cite{EG2}; a different proof for $G$ discrete, was given in \cite{GH}.

In \cite{CW1} Connes and Woods proved that any funny rank one (a generalization of rank one) transformation is AT and that any AT transformation has zero entropy. Apart from some recent results (\cite{L}, \cite{DQ}) there are not many "concrete" examples of approximately transitive group actions.

Let $S_\infty=\bigcup_{k\geq 1} S_{k}$ be the group of finite permutations of $\mathbb{N}=\{1,2,3,\ldots \}$ and let $(X,\mathfrak{B},\nu)$ be the product space  $\prod_{k\geq 1}\{0,1\}$ endowed with the product $\sigma$-algebra and the product probability measure $\nu=\otimes_{k\geq 1}\nu_{k}$.

In this paper we concentrate our attention on the following well known action of $S_\infty$ on the product space $(X,\mathfrak{B},\nu)$ that associates to each permutation $\sigma\in\mathcal{S}_{\infty}$ a non-singular automorphism of $(X,\nu)$ (also denoted $\sigma$ and) defined by
\begin{equation*}\label{action}\tag{1}
\sigma(x_{1},x_2,x_3,\ldots)=(x_{\sigma(1)},x_{\sigma(2)},x_{\sigma(3)},\ldots) , \text{ for } x=(x_k)_{k\geq 1}\in X.
\end{equation*}

Before stating the main result of this paper, let us recall the definition of approximate transitivity.
\begin{definition*}\cite{CW1}
An action $\alpha$ of a Borel group $G$ on a Lebesgue measure space $(X,\nu)$ is
approximately transitive (abbreviated AT) if given $n<\infty$, functions $f_1,f_2, \ldots, f_n \in L^{1}_{+}(X,\nu)$ and $\varepsilon> 0,$
there exists a function $f\in L^{1}_{+}(X,\nu)$, elements $ g_1,\dots,g_m \ \in\ G$ for some
$m<\infty,$ and $\lambda_{j,k}\geq 0$, for $k=1,2,\dots,m$ and $j=1,2,\ldots,n$ such that
$$ \| f_j\ -\ \sum_{k=1}^m \lambda_{j,k}\beta_{g_{k}}(f) \| \leq\varepsilon,\quad
    j=1,\dots,n,$$
where $\|\ \cdot\ \|$ represents the $L^{1}$ norm and
$\beta_{g_k}(f) (x)\ = \ f\circ\alpha_{g_k^{-1}}(x)\frac{d\mu\circ g_k^{-1}}{d\mu}(x).$
\end{definition*}
Recall that Bernoulli measures on $X=\prod_{k\geq 1}\{0,1\}$ are the product measures $\nu_\lambda=\otimes_{k\geq 1} \nu_{\lambda,k}$ with $\nu_{\lambda,k}(0)=\frac{1}{1+\lambda}$ and $\nu_{\lambda,k}(1)=\frac{\lambda}{1+\lambda}$, where $0<\lambda\leq 1$.

Our main result proved in Section 2 is the following.

\begin{theorem*}
For $0<\lambda\leq 1$ the natural action of $S_{\infty}$ on $(X,\nu_{\lambda})$ is approximately transitive (AT).
\end{theorem*}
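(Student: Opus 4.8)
My plan is to reduce everything to the case $\lambda = 1$ (the symmetric measure) and to exploit the fact that the $S_\infty$-action on $(X,\nu_1)$ is, up to isomorphism, a familiar object: the tail boundary / Pascal-adic picture. Concretely, by symmetry the measure $\nu_1$ pushes forward under the map $x \mapsto (\#\{i \le n : x_i = 1\})_{n\ge 1}$ to the measure on paths in Pascal's triangle, and $S_\infty$-orbits coincide a.e. with the equivalence relation "same number of $1$'s in each initial segment, eventually." So it suffices to verify AT for this concrete action.

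The core of the argument will be to show that the indicator functions of cylinder sets can be approximately synthesized, in the sense of the definition, from translates of a single nonnegative $L^1$ function $f$. Since cylinder-set indicators are dense in $L^1_+$, it is enough to treat $f_1,\dots,f_n$ that are cylinder functions depending only on the first $N$ coordinates. First I would take $f$ to be (a normalization of) the indicator of a single carefully chosen cylinder far out — say a set $C$ specifying the values of coordinates $M+1,\dots,M+L$ for $M \gg N$ and $L$ large — or a convex combination of a few such. Applying permutations $g_k$ that move the block $\{M+1,\dots,M+L\}$ around while redistributing $1$'s, the functions $\beta_{g_k}(f)$ become indicators (times Radon–Nikodym factors, which are identically $1$ when $\lambda=1$) of cylinders on the first $N$ coordinates together with a tail constraint; averaging over many such $g_k$ with weights $\lambda_{j,k}$ lets the tail constraint wash out by a law-of-large-numbers / Stirling estimate, leaving something $L^1$-close to $f_j$. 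This is exactly the mechanism by which product odometers and rank-one-like transformations are shown to be AT in \cite{CW1}, adapted to the permutation action.

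The main obstacle, and where the real work lies, is the quantitative combinatorial estimate: one must choose $M$, $L$, the number $m$ of permutations, and the coefficients $\lambda_{j,k}$ so that the averaged tail factors reproduce the prescribed marginal weights on the first $N$ coordinates to within $\varepsilon$ in $L^1$, uniformly over $j=1,\dots,n$. This amounts to showing that binomial weighting coefficients $\binom{L}{\cdot}$ restricted to an appropriate window are "flat enough" to approximate an arbitrary probability vector on $\{0,1\}^N$ after the permutations spread mass across the first $N$ coordinates — a Stirling-type concentration argument. Handling $n > 1$ simultaneously forces $f$ to be a common "source," so $f$ must be taken as a small convex combination of cylinder indicators rich enough to feed all $f_j$; bookkeeping this is the delicate part.

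Finally, for general $0 < \lambda < 1$ I would either repeat the argument keeping track of the Radon–Nikodym cocycle $\frac{d\nu_\lambda \circ g^{-1}}{d\nu_\lambda}(x) = \lambda^{\,(\#1\text{'s moved in}) - (\#1\text{'s moved out})}$, noting that for a permutation preserving the number of $1$'s in each relevant block this factor is $1$, so the same synthesis goes through; or, more cleanly, observe that $\nu_\lambda$ and $\nu_1$ are related by a change of variables compatible with the tail equivalence relation and invoke the fact that AT is an invariant of the orbit equivalence class of the action (equivalently, of the associated flow), which is established in \cite{CW1}. I expect the cocycle computation to be routine once the $\lambda = 1$ case is in hand.
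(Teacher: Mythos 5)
Your overall synthesis mechanism --- approximate each $f_j$ by a cylinder function, take $f$ supported on cylinders far out, move it around by permutations and average so that the binomial weights flatten out --- is indeed the shape of the paper's argument. But there are two genuine gaps. First, the proposed reduction of general $\lambda$ to $\lambda=1$ does not work: for $\lambda\neq 1$ the measures $\nu_\lambda$ and $\nu_1$ are mutually singular (Kakutani's criterion), so there is no nonsingular change of variables relating them; and AT is \emph{not} an invariant of the orbit equivalence class of an action --- by Dye's theorem all ergodic finite measure preserving actions of countable amenable groups are orbit equivalent to the dyadic odometer, yet AT forces zero entropy, so orbit equivalence cannot transport the AT property. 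What \cite{CW1} establishes is that AT of the \emph{associated flow} is an orbit-equivalence invariant; for a measure-preserving action the associated flow is the transitive translation flow on $\mathbb{R}$, which is trivially AT and carries no information about the action. Note also that the Radon--Nikodym cocycle is identically $1$ for every Bernoulli measure $\nu_\lambda$, since finite permutations preserve any i.i.d.\ product measure; the $\lambda$-dependence enters not through the cocycle but through the fact that distinct cylinders of length $n$ have different $\nu_\lambda$-measures, which is why the paper must weight the synthesis of $\chi_{C(x)}$ by $\lambda^{\#(x)}$.

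Second, the step you yourself flag as ``where the real work lies'' is precisely the content of the paper's proof and is absent from yours. The paper takes $f=\sum_{j=n}^{p-n}\bigl(\binom{p-n}{j}/(p-n)!\bigr)\chi_{C(1_j,0_{p-j})}$ and, for each $x\in X^n$, applies one product of transpositions $\sigma_x$ followed by averaging over the subgroup $S_{n,p}$ of permutations fixing $\{1,\dots,n\}$ and $\{p+1,p+2,\dots\}$; the resulting error in approximating $\chi_{C(x)}$ is controlled by the total-variation distance between a binomial distribution and its shift by $\#(x)$ places, bounded by $2\#(x)\bigl(\pi(1+\lambda)/(p\lambda)\bigr)^{1/2}$ via a Fourier argument (the $L^1$ norm of $(1-e^{ipt})P_n(t)$), not by a bare Stirling concentration estimate. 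Without this shifted-binomial lemma and an explicit scheme showing how a single $f$ feeds all $2^n$ cylinder indicators simultaneously, the proposal remains an outline rather than a proof.
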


In Section 3, we generalize our main result to a larger class of product probability measures and show that the corresponding associated flow of $S_\infty$ is AT.

\section{Preliminaries}

Throughout this section, $(X,\nu)$ will denote the Lebesgue space $X=\prod_{k\geq 1}\{0,1\}$ and $\nu$ the product measure $\nu=\otimes_{k\geq 1}\nu_k$, with $\nu_k(0)= \frac 1{1+\lambda_k}$ and  $\nu_k(1)=\frac {\lambda_k}{1+\lambda_k}$, $0<\lambda_k\leq 1$.
In this section we prove some technical results, we will need in Section 2.
\begin{lemma}\label{l1}
Let $(X,\nu)$ be as above. For $0\leq r\leq n$, let $A(n,r)=\{x\in X: \ \#\{1\leq k\leq n: \  x_k=1\}=r \}$ denote the union of cylinder sets on $n$ symbols with exactly $r \quad 1'$s. Then
$$\nu (A(n,r))\quad<\quad \left(\frac{\pi}{\sum_{k=1}^n \lambda_k} \right)^{\frac1{2}},\quad 0\leq r \leq n. $$
\end{lemma}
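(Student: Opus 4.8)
The plan is to recognize $\nu(A(n,r))$ as a single Fourier coefficient of an explicit product and then estimate it by an $L^1$-norm on the unit circle. Since $A(n,r)$ is a finite union of cylinders, summing their $\nu$-measures and expanding $\prod_{k=1}^n(1+\lambda_k z)$ into elementary symmetric polynomials shows that $\nu(A(n,r))$ equals the coefficient of $z^r$ in $\prod_{k=1}^n\frac{1+\lambda_k z}{1+\lambda_k}$. Cauchy's formula on $|z|=1$ then gives
$$\nu(A(n,r))=\frac{1}{2\pi}\int_{-\pi}^{\pi}e^{-ir\theta}\prod_{k=1}^n\frac{1+\lambda_k e^{i\theta}}{1+\lambda_k}\,d\theta ,\qquad\text{hence}\qquad \nu(A(n,r))\le\frac{1}{2\pi}\int_{-\pi}^{\pi}\prod_{k=1}^n\frac{\lvert 1+\lambda_k e^{i\theta}\rvert}{1+\lambda_k}\,d\theta .$$

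The next step is a pointwise bound on the integrand. A direct computation gives $\lvert 1+\lambda_k e^{i\theta}\rvert^2=(1+\lambda_k)^2-4\lambda_k\sin^2(\theta/2)$, so, writing $c_k:=\lambda_k/(1+\lambda_k)^2$ and using $\sqrt{1-t}\le e^{-t/2}$ (legitimate here because $c_k\le\tfrac14$ for $0<\lambda_k\le1$, so the radicand is nonnegative),
$$\frac{\lvert 1+\lambda_k e^{i\theta}\rvert}{1+\lambda_k}=\bigl(1-4c_k\sin^2(\theta/2)\bigr)^{1/2}\le\exp\bigl(-2c_k\sin^2(\theta/2)\bigr).$$
Taking the product over $k$ and setting $C:=\sum_{k=1}^n c_k$ yields $\nu(A(n,r))\le\frac1\pi\int_0^\pi e^{-2C\sin^2(\theta/2)}\,d\theta=\frac2\pi\int_0^{\pi/2}e^{-2C\sin^2\varphi}\,d\varphi$.

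To finish I would invoke Jordan's inequality $\sin\varphi\ge 2\varphi/\pi$ on $[0,\pi/2]$ to dominate the last integral by a Gaussian one:
$$\nu(A(n,r))<\frac2\pi\int_0^{\infty}e^{-8C\varphi^2/\pi^2}\,d\varphi=\frac{\sqrt\pi}{2\sqrt2\,\sqrt C}.$$
Finally, $0<\lambda_k\le 1$ forces $(1+\lambda_k)^2\le 4$, whence $C=\sum_k\lambda_k/(1+\lambda_k)^2\ge\tfrac14\sum_k\lambda_k$, so $\sqrt C\ge\tfrac12\bigl(\sum_k\lambda_k\bigr)^{1/2}$ and therefore $\nu(A(n,r))<\tfrac1{\sqrt2}\bigl(\pi/\sum_k\lambda_k\bigr)^{1/2}<\bigl(\pi/\sum_k\lambda_k\bigr)^{1/2}$, which is the claim.

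The argument is routine once the Fourier representation is in place; the only point needing a little care is the bookkeeping of constants, but the elementary estimates $\sqrt{1-t}\le e^{-t/2}$ and $\sin\varphi\ge 2\varphi/\pi$ are more than strong enough — they even produce the spare factor $1/\sqrt2$ — and the hypothesis $\lambda_k\le 1$ is used exactly once, in the passage from $C$ to $\sum_k\lambda_k$. (Note that the bound is informative only when $\sum_k\lambda_k$ is large; when $\sum_k\lambda_k$ is small the right-hand side exceeds $1\ge\nu(A(n,r))$ and there is nothing to prove.)
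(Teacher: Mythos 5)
Your proof is correct and follows essentially the same route as the paper: both express $\nu(A(n,r))$ as a Fourier coefficient of $\prod_k\frac{1+\lambda_k e^{i\theta}}{1+\lambda_k}$, bound it by the $L^1$-norm of the integrand, pass to an exponential via $\sqrt{1-t}\le e^{-t/2}$ (the paper reaches the same bound by chaining $\sqrt{1-x}\le 1-\tfrac{x}{2}$ with $\log(1-x)\le -x$), and dominate by a Gaussian integral using the Jordan-type inequality $\cos t-1\le -t^2/\pi^2$. The only cosmetic difference is where the hypothesis $\lambda_k\le 1$ enters (you use $(1+\lambda_k)^2\le 4$ at the end rather than before exponentiating), which nets you a spare factor of $1/\sqrt2$.
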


\begin{proof} Let $$P_n(t)\ =\ \prod\limits_{k=1}^n\left(\frac{1}{1+\lambda_k}\quad + \quad
	\frac{\lambda_k}{1+\lambda_k}e^{it}\right).$$ Then it is easy to check that
	$$\nu (A(n,r))\ =\ \frac1{2\pi} \int_0^{2\pi} P_n(t)e^{-irt}\,dt\ .$$
As $0 < \lambda_k \leq 1$, we have	
\begin{align*}
	\nu (A(n,r))\quad &\leq \quad \frac1{2\pi}\int_0^{2\pi} |P_n(t)|\,dt \\
	  	   &=\quad \frac1{2\pi} \int_0^{2\pi} \prod\limits_{k=1}^n \left( 1\ -
			\frac{2\lambda_k}{(1+\lambda_k)^2}(1-\cos t) \right )^{\frac1{2}}\,dt\\
		   &\leq \frac1{2\pi} \int_0^{2\pi}\left\{ \prod\limits_{k=1}^n \left(1 \ - \
                 \frac{(1-\cos t)}{2} \lambda_k \right)\right\}^{\frac1{2}}\,dt,
	  \end{align*}
Now note that \ $\sqrt{1-x} \ \leq 1 - \frac{x}{2}\ $
and $\ \log (1-x)\leq -x\ $ for \ $0 < x < 1 \ $ and so
\begin{align*}
\nu(A(n,r))\quad &\leq \frac1{2\pi}\int_0^{2\pi}\exp\left(\frac{(\cos t-1)}{4}
	\sum_{k=1}^{n}\lambda_k\right)\,dt \quad \\&=\frac1{\pi}\int_0^{\pi}\exp\left(
	\frac{(\cos t-1)}{4}\sum_{k=1}^n\lambda_k\right)dt
\end{align*}
as $\ \cos t\ -1 $ is symmetric about $t\ =\ \pi$. Since $-t^2/\pi^2\ \geq \ \cos t - 1$
for $0\ \leq \ t \ \leq\pi,$ we have
\begin{align*}
\nu(A(n,r))\quad&\leq\quad\frac1{\pi}\int_0^{\pi}
e^{-\left(\sum_{k=1}^n\lambda_k\right)t^2/4\pi^2}\,dt <\quad\frac1{\pi}\int_0^{\infty}
e^{-\left(\sum_{k=1}^n\lambda_k\right)t^2/4\pi^2}\,dt  \\ & =\left(\frac{\pi}{\sum_{k=1}^n\lambda_k}\right)^{\frac 12}.
\end{align*}
\end{proof}
	
\begin{lemma}\label{l2}
Let $(X,\nu)$ be as above and $p$ be a fixed positive integer. Then, for $n\geq p$,
$$\sum_{r=p}^n | \nu(A(n,r)) - \nu(A(n,r-p)) |\quad < \quad 2p
	\left( \frac{\pi}{\sum_{k=1}^n\lambda_k} \right )^{\frac12}.$$
\end{lemma}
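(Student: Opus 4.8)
The plan is to reduce the bound to the unimodality of $r\mapsto\nu(A(n,r))$ combined with Lemma~\ref{l1}. Write $a_r=\nu(A(n,r))$ for $0\le r\le n$. Since under $\nu$ the coordinates are independent and $A(n,r)$ is the event that exactly $r$ of the first $n$ coordinates equal $1$, the generating function is
\[
\sum_{r=0}^{n}a_r z^r=\prod_{k=1}^{n}\frac{1+\lambda_k z}{1+\lambda_k},
\]
a polynomial with strictly positive coefficients all of whose zeros $z=-1/\lambda_k$ are real.

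First I would record that such a coefficient sequence is unimodal. By Newton's inequalities for a real-rooted polynomial, $\bigl(a_r/\binom{n}{r}\bigr)^2\ge\bigl(a_{r-1}/\binom{n}{r-1}\bigr)\bigl(a_{r+1}/\binom{n}{r+1}\bigr)$ for $1\le r\le n-1$; since $\binom{n}{r}^2\ge\binom{n}{r-1}\binom{n}{r+1}$ this gives $a_r^2\ge a_{r-1}a_{r+1}$, and as every $a_r>0$ the finite sequence $(a_r)_{r=0}^{n}$ is log-concave, hence unimodal: there is an index $m$ with $a_0\le\cdots\le a_m\ge\cdots\ge a_n$. (Alternatively, log-concavity follows by induction, convolving the $n$ Bernoulli factors one at a time.)

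Next I would split the sum $\sum_{r=p}^{n}\lvert a_r-a_{r-p}\rvert$ according to the residue of $r$ modulo $p$. For each fixed $j\in\{0,1,\dots,p-1\}$ the subsequence $(a_{j+ip})_{i\ge0}$, indexed by those $i$ with $j+ip\le n$, is a restriction of a unimodal sequence and hence is itself unimodal, so its total variation telescopes to at most $2\max_i a_{j+ip}\le 2\max_{0\le r\le n}a_r$. Summing over $j=0,1,\dots,p-1$ accounts for exactly the terms of the original sum, whence
\[
\sum_{r=p}^{n}\bigl\lvert\nu(A(n,r))-\nu(A(n,r-p))\bigr\rvert\le 2p\max_{0\le r\le n}\nu(A(n,r)).
\]
Finally, Lemma~\ref{l1} gives $\nu(A(n,r))<\bigl(\pi/\sum_{k=1}^{n}\lambda_k\bigr)^{1/2}$ for every $r$, so the right-hand side is strictly less than $2p\bigl(\pi/\sum_{k=1}^{n}\lambda_k\bigr)^{1/2}$, which is the claim.

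I expect the unimodality step to be the only real point: it is where one must use that $\nu$ is a product measure (equivalently, that the generating polynomial is real-rooted), since the remainder is just telescoping and a routine accounting of terms. The strict inequality then comes for free from the strict inequality already established in Lemma~\ref{l1}.
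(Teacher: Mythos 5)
Your proof is correct, and it arrives at exactly the same reduction as the paper: bound the sum by $2p\max_{0\le r\le n}\nu(A(n,r))$ and then apply Lemma~\ref{l1}. The difference is in how that reduction is carried out. The paper works on the Fourier side: it writes $1-e^{ipt}=(1-e^{it})\sum_{m=0}^{p-1}e^{imt}$ to get $\|(1-e^{ipt})P_n\|_1\le p\|(1-e^{it})P_n\|_1$, and then cites Lemma~3.18 and Proposition~3.19 of \cite{B} for the identity $\|(1-e^{it})P_n\|_1=2\max_r\alpha_r$. Your residue-class decomposition modulo $p$ is precisely the de-Fourierized version of that geometric-sum factorization, and the cited identity from \cite{B} is precisely the unimodality-plus-telescoping fact you prove from scratch via real-rootedness of $\prod_k(1+\lambda_k z)$ and Newton's inequalities. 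So the two arguments are morally identical; what yours buys is self-containedness (no appeal to \cite{B}) and a purely combinatorial presentation, at the cost of having to establish log-concavity explicitly. One cosmetic point: your chain of inequalities yields $\le 2p\max_r a_r$, and the strict inequality in the statement indeed comes only from the strict bound in Lemma~\ref{l1}, exactly as in the paper.
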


\begin{proof} Let
 $$P_n(t)\ = \ \prod\limits_{k=1}^n \left( \frac{1}{1+\lambda_{k}} + \frac{\lambda_k}{1+\lambda_k}e^{it}\right)
   \ = \ \sum_{k=0}^n \alpha_k e^{ikt}$$
with $\alpha_k\ =\ \nu(A(n,k))$. Then, for $p\leq n$,
\begin{align*}
	\|(1-e^{ipt})P_n\|_1\quad\ & =\quad\ |\alpha_0|+|\alpha_1|+ \dots + |\alpha_{p-1}|\\
						&+ \ \sum_{r=p}^n |\alpha_r - \alpha_{r-p} | +|\alpha_{n-p+1}| + \dots + |\alpha_{n}|,
  \end{align*}
where $\ \| \cdot \|_1 \ $ denotes the Fourier one-norm, i.e. the sum of the absolute values
of the Fourier coefficients. Thus
\begin{align*}
\sum_{r=p}^n |\nu(A(n,r))&-\nu(A(n,r-p))| = \sum_{r=p}^n |\alpha_r - \alpha_{r-p}| \\
  & \leq \  \|(1-e^{ipt})P_n\|_1  = \ \|\sum_{m=0}^{p-1}e^{imt}(1-e^{it})P_n\|_1 \\ & \leq \
    \sum_{m=0}^{p-1}\|e^{imt}(1-e^{it})P_n\|_1 =  p\|(1-e^{it})P_n\|_1 .
  \end{align*}
Now by Lemma 3.18 and Proposition 3.19 from \cite{B} we have
$$ \|(1-e^{it})P_n\|_1 \quad = \quad 2\max_{0 \leq r \leq n}\alpha_r \quad = \quad
     2\max_{0 \leq r \leq n}\nu(A(n,r)),$$
and by the above lemma, we conclude
$$\sum_{r=p}^n | \nu(A(n,r)) - \nu(A(n,r-p)) |\quad < \quad 2p
	\left( \frac{\pi}{\sum_{k=1}^n \lambda_k} \right )^{\frac12}.$$ The lemma is proved.
\end{proof}
\begin{remark}
Lemmas \ref{l1} and \ref{l2} are certainly well known to probabilists.
\end{remark}
For $\lambda\in (0,1]$ and the measure $\nu_\lambda=\otimes \nu_{\lambda, k}$ (with $\nu_{\lambda, k}(0)=\frac{1}{1+\lambda}$, $\nu_{\lambda, k}(1)=\frac{\lambda}{1+\lambda}$), we have
$$\nu_\lambda\left(A(n,r)\right)= \binom{n}{r}\frac{\lambda^r}{(1+\lambda)^n}.$$
Hence Lemma \ref{l1} and \ref{l2} become
\begin{corollary}\label{cor}
Let $p$ be a positive integer and $\lambda\in(0,1]$. Then
\begin{align*}
&1) \ \binom{p}{r}\frac{\lambda^{r}}{(1+\lambda)^{p}}<\left(\frac{\pi(1+\lambda)}{p\lambda}\right)^{\frac{1}{2}}\text{ for every }0\leq r\leq p,\\
&2) \ \sum_{r=n}^{p}\left| \binom{p}{r}
		\frac{\lambda^{r}}{(1+\lambda)^{p}}
		-  \binom{p}{r-n}\frac{\lambda^{r-n}}{(1+\lambda)^{p}}\right|< 2n
			\left( \frac{\pi(1+\lambda)}{p\lambda} \right )^{\frac12}\text{ for every }0<n\leq p.
\end{align*}			
		
			\end{corollary}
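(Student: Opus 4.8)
The plan is to obtain the Corollary as a direct specialization of Lemmas~\ref{l1} and~\ref{l2} to the case in which all the parameters coincide. Setting $\lambda_k=\lambda$ for every $k$ turns the product measure $\nu$ of the Preliminaries into the Bernoulli measure $\nu_\lambda$, so that $\sum_{k=1}^m\lambda_k=m\lambda$, and, as recorded immediately before the statement, one has the closed form $\nu_\lambda(A(m,r))=\binom{m}{r}\frac{\lambda^r}{(1+\lambda)^m}$. Everything then reduces to substituting these two identities into the conclusions of the two lemmas and keeping track of which variable plays which role.

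For part~1), I would apply Lemma~\ref{l1} with $p$ symbols: it yields $\nu_\lambda(A(p,r))<(\pi/(p\lambda))^{1/2}$ for every $0\le r\le p$. Rewriting the left-hand side with the binomial formula and using the trivial bound $1\le 1+\lambda$ to replace $(\pi/(p\lambda))^{1/2}$ by the larger quantity $(\pi(1+\lambda)/(p\lambda))^{1/2}$ gives exactly the asserted inequality. For part~2), I would apply Lemma~\ref{l2} with $p$ symbols and shift $n$, which is legitimate precisely when $n\le p$; its conclusion reads $\sum_{r=n}^{p}|\nu_\lambda(A(p,r))-\nu_\lambda(A(p,r-n))|<2n(\pi/(p\lambda))^{1/2}$. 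Substituting the binomial expressions for $\nu_\lambda(A(p,r))$ and $\nu_\lambda(A(p,r-n))$ and again weakening $(\pi/(p\lambda))^{1/2}$ to $(\pi(1+\lambda)/(p\lambda))^{1/2}$ yields the second inequality.

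There is no genuine obstacle here: the content is entirely contained in Lemmas~\ref{l1} and~\ref{l2}, and the Corollary is just a transcription of them in the Bernoulli case. The only points that require care are the clash of notation — the Corollary uses $p$ for the number of coordinates and $n$ for the shift, which is the reverse of the convention adopted in the statement of Lemma~\ref{l2} — and the remark that the factor $1+\lambda$ appearing in the two displayed bounds is merely an (inessential) weakening of the sharper constant that the lemmas actually provide.
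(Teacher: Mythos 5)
Your proposal is correct and matches the paper exactly: the Corollary is stated there with no separate proof, as an immediate specialization of Lemmas~\ref{l1} and~\ref{l2} to $\lambda_k\equiv\lambda$, so that $\sum_{k=1}^p\lambda_k=p\lambda$ and $\nu_\lambda(A(p,r))=\binom{p}{r}\frac{\lambda^r}{(1+\lambda)^p}$. Your handling of the swapped roles of $n$ and $p$ and your observation that the factor $1+\lambda\ge 1$ only weakens the bound are both accurate.
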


\section{Main Result}

Let $0<\lambda\leq 1$ and $\nu_\lambda=\otimes_{k\geq 1}\nu_{\lambda,k}$ be a Bernoulli measure on $X=\prod\limits_{k=1}^{\infty}  \{0  , 1\}$. Recall (see for example \cite{AP} or \cite{SV}) that the natural action of $S_\infty$ on $(X,\nu_\lambda)$ is then ergodic and measure preserving. Moreover any $S_\infty$-invariant, ergodic probability measure on $X$ is a Bernoulli measure. In this section, we prove the theorem stated in the introduction:

\begin{theorem}\label{AT}
The natural action of $S_{\infty}$ on $(X,\nu_\lambda)$ is approximately transitive.
\end{theorem}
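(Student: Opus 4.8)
The plan is to combine the binomial estimates of Section~1 with a direct ``spreading'' construction. Write $q=\frac{\lambda}{1+\lambda}$, so that $\nu_{\lambda,k}(1)=q$ and $\nu_{\lambda,k}(0)=1-q$. Since $\nu_\lambda$ is $S_\infty$-invariant, all Radon--Nikodym cocycles are trivial and $\beta_g(h)=h\circ g^{-1}$, so the AT condition only asks that each $f_j$ be approximated in $L^1$ by a \emph{non-negative} combination $\sum_k\lambda_{j,k}\,f\circ g_k^{-1}$ of translates of one fixed $f\in L^1_+$. For $v\in\{0,1\}^N$ let $[v]=\{x\in X:x_k=v_k,\ 1\le k\le N\}$ and let $\mathbf{1}_{[v]}$ be its indicator. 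Cylinder functions on the first $N$ coordinates are dense in $L^1_+(X,\nu_\lambda)$, and after replacing each $f_j$ by such an approximant we may assume $f_j=\sum_{v\in\{0,1\}^N}c_v^{(j)}\mathbf{1}_{[v]}$, where $c_v^{(j)}\ge0$ and each $c_v^{(j)}$ is bounded in terms of $\|f_j\|_1$ (because $\nu_\lambda([v])\ge\min(q,1-q)^N$). Hence it suffices to prove the uniform statement: \emph{for every $N$ and every $\delta>0$ there is a single $f\in L^1_+(X,\nu_\lambda)$ such that every $\mathbf{1}_{[v]}$, $v\in\{0,1\}^N$, lies within $\delta$ of the non-negative cone generated by $\{f\circ g^{-1}:g\in S_\infty\}$.} Given such an $f$, one assembles the $g$'s and the $\lambda_{j,k}$'s from the $2^N$ approximations, and a sufficiently small $\delta$ yields the theorem.

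For the construction, fix $N$ and a large integer $L$ (to be chosen in terms of $\delta$), and take $f=\mathbf{1}_{[1^N0^N]}$, the indicator of $\{x:x_1=\dots=x_N=1,\ x_{N+1}=\dots=x_{2N}=0\}$. Fix $v\in\{0,1\}^N$ and let $I_1(v),I_0(v)\subseteq\{1,\dots,N\}$ be its one- and zero-positions, with $|I_1(v)|=s$. For each pair of disjoint sets $P,Q\subseteq\{2N+1,\dots,2N+L\}$ with $|P|=N-s$ and $|Q|=s$, pick a finite permutation $g_{P,Q}$ taking the ``ones block'' $\{1,\dots,N\}$ onto $I_1(v)\cup P$ and the ``zeros block'' $\{N+1,\dots,2N\}$ onto $I_0(v)\cup Q$; since $f$ is constant on each block, $f\circ g_{P,Q}^{-1}=\mathbf{1}_{[v]}\cdot\mathbf{1}_{\{x\equiv1\text{ on }P,\ x\equiv0\text{ on }Q\}}$ independently of the remaining choices in $g_{P,Q}$. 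Averaging, with equal weights, over all $\binom{L}{N-s}\binom{L-N+s}{s}$ such pairs produces the function $\mathbf{1}_{[v]}\cdot\psi_s(T_L(x))$, where $T_L(x)=\#\{2N+1\le k\le2N+L:x_k=1\}$ and
$$\psi_s(r)=\binom{r}{N-s}\binom{L-r}{s}\Big/\Big[\binom{L}{N-s}\binom{L-N+s}{s}\Big],$$
because conditionally on $T_L(x)=r$ exactly $\binom{r}{N-s}\binom{L-r}{s}$ of the pairs $(P,Q)$ are admissible.

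Now $\mathbf{1}_{[v]}$ and $T_L$ are independent, $T_L$ has the binomial $B(L,q)$ law, and $\mathbb{E}\,\psi_s(T_L)=q^{N-s}(1-q)^s=:c_s>0$. Dividing the average above by $c_s$ only rescales the (still non-negative) coefficients, and
$$\big\|\mathbf{1}_{[v]}-c_s^{-1}\psi_s(T_L)\,\mathbf{1}_{[v]}\big\|_1=\nu_\lambda([v])\cdot\mathbb{E}\big|c_s^{-1}\psi_s(T_L)-1\big|\le\mathbb{E}\big|c_s^{-1}\psi_s(T_L)-1\big|.$$
Thus the theorem reduces to showing that $\mathbb{E}\big|c_s^{-1}\psi_s(T_L)-1\big|\to0$ as $L\to\infty$, uniformly over the finitely many values $s=0,\dots,N$. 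This is a concentration statement for the binomial count $T_L$: on the bulk $|r-qL|\le\sqrt{L\log L}$ one expands the ratios of falling factorials to get $c_s^{-1}\psi_s(r)=1+O\!\big(\sqrt{\log L/L}\big)$ (constant depending on $N,\lambda$), whereas on the complementary large-deviation range $c_s^{-1}\psi_s(r)$ stays bounded by a constant depending only on $N$ and $\lambda$ (using $\binom{r}{k}/\binom{L}{k}\le(r/L)^k\le1$ and $\binom{L-r}{k}/\binom{L-N}{k}\le2^N$ for $L\ge2N$), and that range has exponentially small probability. These are precisely the point-mass and ``shift'' bounds for the binomial laws $\nu_\lambda(A(p,r))$ recorded in Lemmas~\ref{l1} and~\ref{l2} and Corollary~\ref{cor}, from which the required estimate follows; choosing $L$ large enough that $\mathbb{E}|c_s^{-1}\psi_s(T_L)-1|<\delta$ for all $s$ completes the argument.

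The main obstacle I expect is exactly this last quantitative step: one must control $c_s^{-1}\psi_s(r)$ simultaneously in the central regime, where it is a small perturbation of $1$, and in the tails, where it may be large but is weighted by an exponentially small probability, and then convert this into an $L^1$- rather than a merely pointwise bound. A secondary point worth checking carefully is that the reduction genuinely produces a \emph{single} $f$ serving all target functions at once; this is why $f$ is taken with exactly $N$ ones and $N$ zeros, so that every pattern $v\in\{0,1\}^N$ can be realized ``inside'' a translate of $f$, with the leftover coordinates pushed far out into the tail where the concentration of $T_L$ can be exploited.
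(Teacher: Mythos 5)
Your proposal is correct, and while it follows the same overall strategy as the paper --- reduce to cylinder indicators of length $N$ (Lemma~\ref{l3}), then approximate each one by symmetrizing one fixed function over permutations acting on an auxiliary coordinate block, with the error controlled by binomial asymptotics (the content of Lemma~\ref{23}) --- the concrete construction is genuinely different in two places. First, the paper's test function is the weighted sum $f=\sum_{j=n}^{p-n}\binom{p-n}{j}\,((p-n)!)^{-1}\chi_{C(1_j,0_{p-j})}$ over many cylinders, symmetrized over the full subgroup $S_{n,p}$ so as to be constant on each slice $C(x)\cap A(p,j)$ with coefficients that are ratios of binomials in $j$; your $f$ is the single cylinder $[1^N0^N]$, averaged over a transversal of pairs $(P,Q)$ rather than a subgroup, yielding the hypergeometric ratio $\psi_s(T_L)\mathbf{1}_{[v]}$ evaluated at the binomial count $T_L$. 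Second, the paper's error is literally the total-variation distance between a binomial law and its shift by $k=\#(x)$, i.e.\ exactly part 2) of Corollary~\ref{cor}, whereas yours is the $L^1$ law-of-large-numbers statement $\mathbb{E}\bigl|c_s^{-1}\psi_s(T_L)-1\bigr|\to 0$. That statement is true and your bulk-plus-tail argument for it is sound (uniform boundedness of $c_s^{-1}\psi_s$ together with Chebyshev or Hoeffding concentration of $T_L$ suffices), but it is not, as you assert, a direct consequence of Lemmas~\ref{l1} and~\ref{l2}, which give sup-norm and shift bounds on the point masses rather than a concentration inequality; you should either invoke Chebyshev explicitly, or note the identity $\binom{r}{a}\binom{L-r}{b}\binom{L}{r}=\binom{L}{a}\binom{L-a}{b}\binom{L-a-b}{r-a}$, which shows $c_s^{-1}\psi_s(r)\Pr(T_L=r)=\Pr(T_{L-N}=r-N+s)$ and hence identifies your error term with a shifted-binomial total-variation distance that Corollary~\ref{cor} does control. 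This is the only loose joint; closing it is routine. What your route buys is a simpler $f$ and a probabilistically transparent error analysis; what the paper's route buys is that its error term is precisely the shift estimate already proved in Section~1, and a form of the construction (and of the Radon--Nikodym bookkeeping) that carries over verbatim to the non-Bernoulli measures of Section~3 and to the $\widetilde{AT}$ refinement, since there the measure is no longer $S_\infty$-invariant and one can no longer discard the cocycle as you do in your first reduction.
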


Let us introduce notations, we will need below. For $n\geq 1$, let $X^{n}=\prod\limits \limits_{k=1}^{n}\{0,1\}$. For $x=(x_1,x_2,\ldots, x_n)\in X^{n}$ let $$C(x)=\{y\in X: y_{i}=x_{i}, i=1,2,\ldots,n\}$$ denote the cylinder set of length $n$ whose first $n$ symbols are given by $x$. 
Let $$\mathscr{S}_{n}^+=\left\{ \sum_{x\in X^{n}}\alpha_{x}\chi_{C(x)}, :  \alpha_{x}\in\mathbb{R}_+ \right\}.$$ Note that for $m\geq n$, every cylinder set of length $n$ decomposes into a disjoint union of cylinders of length $m$, and so,  $\mathscr{S}_{n}^+\subseteq \mathscr{S}_{m}^+$.

The proof of Theorem \ref{AT} will follow from the two technical lemmas bellow. The first one is well known and we omit its proof.

\begin{lemma}\label{l3}
Let $f\in L^{1}_{+}(X,\nu)$ be a non-negative real valued function. Then for every $\varepsilon>0$, there exist $n\geq 1$ and $g\in \mathscr{S}_{n}^{+}$ such that  $\|f-g\|\leq \varepsilon$.
\end{lemma}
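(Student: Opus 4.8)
The plan is to realize $g$ as the conditional expectation of $f$ onto the finite $\sigma$-algebra generated by the first $n$ coordinates, and then to invoke the $L^1$-convergence of martingales (equivalently, the density of $\bigcup_{n}L^1(X,\mathfrak{B}_n,\nu)$ in $L^1(X,\mathfrak{B},\nu)$). For $n\geq 1$ let $\mathfrak{B}_n\subseteq\mathfrak{B}$ be the (finite) $\sigma$-algebra generated by the coordinate maps $x\mapsto x_1,\dots,x_n$; its atoms are precisely the cylinder sets $C(x)$, $x\in X^n$. The sequence $(\mathfrak{B}_n)_{n\geq 1}$ is increasing and $\bigvee_{n\geq 1}\mathfrak{B}_n=\mathfrak{B}$, since cylinder sets generate the product $\sigma$-algebra on $X$.

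Writing $f_n=E[f\mid\mathfrak{B}_n]$ for the conditional expectation, I would then quote the martingale convergence theorem to get $f_n\to f$ in $L^1(X,\nu)$ (and a.e.), so that there exists $n$ with $\|f-f_n\|\leq\varepsilon$. The next step is to check that this $f_n$ lies in $\mathscr{S}_n^+$: being $\mathfrak{B}_n$-measurable, $f_n$ is constant on each atom $C(x)$, and its value there equals $\alpha_x=\nu(C(x))^{-1}\int_{C(x)}f\,d\nu$ when $\nu(C(x))>0$ (one sets $\alpha_x=0$ otherwise). Since $f\geq 0$ one has $\alpha_x\geq 0$, hence $f_n=\sum_{x\in X^n}\alpha_x\chi_{C(x)}$ with $\alpha_x\in\mathbb{R}_+$, i.e.\ $f_n\in\mathscr{S}_n^+$. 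Taking $g=f_n$ completes the argument.

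I do not expect a genuine obstacle here: this is the standard fact that step functions supported on cylinders are $L^1$-dense, which is why the authors call it well known. The only mild points one must keep in view are that the approximant should be supported on cylinders of a single common length $n$ — automatic, since the atoms of $\mathfrak{B}_n$ are exactly the $C(x)$ with $x\in X^n$ — and that its coefficients should be \emph{nonnegative}, which is immediate from $f\geq 0$ together with the positivity of conditional expectation. Alternatively, one could bypass martingales entirely: first approximate $f$ in $L^1$ by a bounded function, then by a simple function, and then approximate each of its level sets in measure by a finite union of cylinders, refining all these cylinders to a common length; the nonnegativity of the coefficients is again inherited from $f\geq 0$.
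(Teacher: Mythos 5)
Your argument is correct: the conditional expectations onto the coordinate $\sigma$-algebras $\mathfrak{B}_n$ converge to $f$ in $L^1$ by martingale convergence, each one is a nonnegative combination of the cylinder indicators $\chi_{C(x)}$, $x\in X^n$, and hence lies in $\mathscr{S}_n^+$. The paper explicitly omits the proof of this lemma as well known, so there is nothing to compare against; what you give is the standard argument one would supply.
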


\begin{lemma}\label{23}
For each $n\geq 1$ and $0<\varepsilon\leq 1$, there exist a positive integer $p$, a function $f\in  \mathscr{S}_{p}^{+}$, a finite subset $S\subset S_{\infty}$ and nonnegative constants $a_{x,\sigma}$, for $x\in X^n$ and $\sigma\in S$ such that for all $x\in X^n$,
$$ \|\chi_{C(x)} - \sum_{\sigma\in S}a_{x,\sigma} \beta_{\sigma}(f) \| \ \leq \ \varepsilon\|\chi_{C(x)}\|.$$
\end{lemma}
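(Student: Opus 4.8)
The plan is to produce a single function $f$ supported on a long block of coordinates, such that averaging the $S_\infty$-translates of $f$ over suitable permutations reproduces each indicator $\chi_{C(x)}$ up to relative error $\varepsilon$. The key structural observation is that if $x\in X^n$ has exactly $r$ ones among its $n$ coordinates, then $\chi_{C(x)}$ has a very symmetric description: moving those $r$ ones by a permutation that fixes $\{1,\dots,n\}$ setwise only permutes among the $\binom nr$ cylinders of length $n$ with $r$ ones, so it is natural to first approximate the "symmetrized" indicator $\frac1{\binom nr}\sum_{y}\chi_{C(y)}$ (sum over $y\in X^n$ with $r$ ones), which equals $\nu_\lambda(\cdot\,|\,A(n,r))$-type object, i.e. a constant multiple of $\chi_{A(n,r)}$. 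So I would split the task into: (a) recover $\chi_{A(n,r)}$ for each $r=0,1,\dots,n$ from translates of one function $f$; and (b) recover each individual $\chi_{C(x)}$ from $\chi_{A(n,|x|)}$ by averaging over permutations of the first $n$ coordinates — but since each such permutation sends $\chi_{A(n,r)}$ to itself, this step must instead be folded into the choice of $f$ so that the averaging simultaneously fixes the "which ones" data.

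Here is the mechanism I would actually use. Fix $n$ and choose $p$ much larger than $n$ (how large is dictated by $\varepsilon$, $n$, $\lambda$ via Corollary \ref{cor}). Let $f=\chi_{C(z)}$ where $z\in X^p$ is a fixed string with some prescribed number $q$ of ones among its $p$ coordinates; then for a permutation $\sigma$ supported on $\{1,\dots,p\}$, $\beta_\sigma(f)=\chi_{\sigma C(z)}$ is again a cylinder of length $p$ with $q$ ones, and averaging uniformly over all such $\sigma$ gives a constant times $\chi_{A(p,q)}$. The point is now a Fubini/marginalization computation: $\nu_\lambda\big(C(x)\cap A(p,q)\big)$ depends on $x$ only through $|x|=r$, and equals $\nu_\lambda(C(x))$ times $\binom{p-n}{\,q-r\,}\lambda^{q-r}/[\binom pq\lambda^q]\cdot(\text{stuff})$; so for fixed $q$ the restriction of $\chi_{A(p,q)}$ to the first $n$ coordinates is $\sum_{r=0}^n c_{q,r}\,\chi_{\{|x|=r\}}$ with explicit coefficients $c_{q,r}$ built from binomial ratios. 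Letting $q$ range over a window of $n+1$ consecutive values, I get $n+1$ functions whose span (after multiplying by appropriate powers of $\lambda$ to kill the $\lambda^{q-r}$ factors, i.e. using different $f$'s, or absorbing this into the $a_{x,\sigma}$) contains all the $\chi_{\{|x|=r\}}$, hence all $\chi_{C(x)}$ after one more round of averaging over $S_n$ inside $S_\infty$. The estimates in Lemma \ref{l1}, Lemma \ref{l2} and Corollary \ref{cor} are exactly what is needed to control the errors: the $L^1$-mass of $A(p,r)$ is $O(p^{-1/2})$ uniformly in $r$, and the $\ell^1$-variation $\sum_r|\nu_\lambda(A(p,r))-\nu_\lambda(A(p,r-k))|$ is $O(k\,p^{-1/2})$, so the "off-diagonal" contamination from nearby values of $q$ is small relative to $\|\chi_{C(x)}\|=\nu_\lambda(C(x))$ once $p$ is large.

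So the concrete steps in order: (1) reduce to recovering the level-set indicators $\chi_{A(n,r)}^{(n)}$ (meaning the function on $X$ that is $1$ on strings whose first $n$ coordinates have $r$ ones), since each $\chi_{C(x)}$ is obtained from $\chi_{A(n,|x|)}^{(n)}$ by averaging over the finite group of permutations of $\{1,\dots,n\}$, which lie in $S_\infty$; (2) for a single cylinder $f=\chi_{C(z)}$ of length $p$ with $|z|=q$, compute $\frac1{|S_p|}\sum_{\sigma\in S_p}\beta_\sigma(\chi_{C(z)})=\frac{1}{\binom pq}\chi_{A(p,q)}$ and then marginalize to express its first-$n$-coordinate structure as an explicit nonnegative combination $\sum_{r} c_{q,r}\chi_{A(n,r)}^{(n)}$; (3) solve the resulting (essentially triangular/Vandermonde-type) linear system over the window $q\in\{q_0,\dots,q_0+n\}$ to write each $\chi_{A(n,r)}^{(n)}$ as a combination of the $\chi_{A(p,q)}$, picking $q_0\sim p\lambda/(1+\lambda)$ near the mode so the coefficients $c_{q,r}$ are bounded below; (4) assemble $f$ (a single function: take $f$ to be a suitable nonnegative combination of the finitely many cylinders $\chi_{C(z_q)}$, which is legitimate since the definition of AT allows one fixed $f$ and lets the $\lambda_{j,k}\ge0$ do the mixing — here the index $j$ is $x$), and use Corollary \ref{cor} to bound the total error by $C(n,\lambda)\,p^{-1/2}\,\|\chi_{C(x)}\|$, then choose $p$ so that $C(n,\lambda)p^{-1/2}<\varepsilon$.

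The main obstacle, I expect, is step (3): controlling the norm of the inverse of the linear system relating $\{\chi_{A(p,q)}\}_{q}$ to $\{\chi_{A(n,r)}^{(n)}\}_{r}$ uniformly in $p$. The coefficients $c_{q,r}$ are ratios of binomials that degenerate as $q$ moves away from the mode, so one must localize the window $\{q_0,\dots,q_0+n\}$ near $p\lambda/(1+\lambda)$ and show the resulting $(n+1)\times(n+1)$ coefficient matrix is uniformly well-conditioned (its entries converge, as $p\to\infty$ with the window fixed relative to the mode, to a fixed invertible matrix — essentially a row of a Pascal-type matrix times a diagonal of powers of $\lambda/(1+\lambda)$ and $1/(1+\lambda)$), so that the amplification of the $O(p^{-1/2})$ error by the inverse is still $o(1)$. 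Everything else — the Fubini computation in step (2), the final triangle-inequality bookkeeping — is routine given Lemmas \ref{l1}, \ref{l2} and Corollary \ref{cor}.
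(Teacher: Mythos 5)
Your proposal has a genuine gap at its core. The mechanism you describe --- take $f=\chi_{C(z)}$ with $|z|=q$, average over all of $S_p$ to get a multiple of $\chi_{A(p,q)}$, marginalize onto the first $n$ coordinates, and invert a linear system to recover the level sets $\chi_{A(n,r)}$ and from them the individual $\chi_{C(x)}$ --- cannot work within the rules of the game. First, any nonnegative combination of the functions $\beta_\sigma(f)$ with $\sigma$ ranging over all of $S_p$ is $S_p$-invariant, and an $S_p$-invariant function stays at $L^1$-distance bounded below by a fixed multiple of $\|\chi_{C(x)}\|$ from the non-invariant target $\chi_{C(x)}$: on $A(p,q)$ with $q$ near the mode, the conditional probability that the first $n$ symbols equal $x$ tends to $\nu_\lambda(C(x))$, which is bounded away from both $0$ and $1$, so no $S_p$-invariant function can be close to $\chi_{C(x)}$ relative to its mass. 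Second, the ``marginalization'' in your step (2) is a conditional expectation onto the first $n$ coordinates; this is not an operation the AT definition permits, and $\|\chi_{A(p,q)}-E[\chi_{A(p,q)}\mid\mathcal{F}_n]\|$ is of the order of $\nu_\lambda(A(p,q))$ itself, not small, so you cannot substitute one for the other in the approximation. Third, step (1) runs backwards: averaging over permutations of $\{1,\dots,n\}$ sends $\chi_{A(n,r)}$ to itself and sends the family $\{\chi_{C(y)}:\#(y)=r\}$ onto $\chi_{A(n,r)}$, not the reverse; there is no nonnegative deconvolution. (A smaller additional issue: the inverse of your $(n+1)\times(n+1)$ matrix $(c_{q,r})$ will in general have negative entries, while the lemma requires $a_{x,\sigma}\ge 0$.) The conditioning of the linear system, which you flag as the main obstacle, is therefore not the real difficulty.

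The missing idea --- and what the paper actually does --- is to break the symmetry deliberately rather than symmetrize and then try to undo it. The function $f$ is a weighted sum of \emph{single representative cylinders} $C(1_j,0_{p-j})$, $n\le j\le p-n$, and for each $x\in X^n$ one uses only the coset $S_{n,p}\sigma_x$, where $\sigma_x$ is a product of transpositions that installs the specific pattern $x$ on the first $n$ coordinates and $S_{n,p}$ consists of permutations fixing $\{1,\dots,n\}$ and $\{p+1,p+2,\dots\}$ pointwise. Summing over this coset spreads the mass over $C(x)\cap A(p,j)$ without ever symmetrizing the first $n$ coordinates, and a single explicit scalar $\lambda^{\#(x)}$ turns the resulting weights into $\chi_{C(x)\cap A(p,j)}$ up to the $\ell^1$-variation of the binomial weights, which Corollary \ref{cor} bounds by $2n\bigl(\pi(1+\lambda)/((p-n)\lambda)\bigr)^{1/2}$. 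No linear system, and hence no conditioning problem, ever arises.
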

\begin{proof}  For $x\ =\ (x_1,\dots ,x_n)\ \in\  X^n$ define $\#(x)$ to be the number of $1$'s in the n-tuple. Let $0<\varepsilon\leq 1$ be given and choose a positive integer $p$ such that
$$p\geq \max\left\{ 2n, \frac{\pi(1+\lambda)}{\lambda}\cdot \frac{18n^2}{\varepsilon^2} \right\}.$$

For $\iota\in\{0,1\}$, to simplify the notation, we will write $\iota_l$ for  a string of $l$ $\iota$'s.
For $n\leq j\leq p-n$, let $C(p,j)$ denote the cylinder set $C(1_j, 0_{p-j})$ and define $f\in \mathscr{S}_p^+$ by
\begin{equation}\label{222}
f\quad = \quad \sum_{j=n}^{p-n}\frac{\binom{p-n}j}{(p-n)!}\ \chi_{C(p,j)} \ .
\end{equation}
For each $x\in X^{k_n}$, let $x^{-1}(0)$ denote the set $\{j\in\{1,2,\ldots, k_n\}: x_j=0\}$ and consider the permutation $\sigma_x$ given by
$$\sigma_x=\prod_{j\in x^{-1}(0)}(j, p-n+j)$$
where $(a,b)$ denotes the transposition between $a$ and $b$.

By construction, for $n\leq j\leq p-n$, we have
\begin{equation}\label{223}
\sigma_x(C(p,j))=C(x,1_{j-n}, 0_{p-n-j}, \overline{x})
\end{equation}
where if $x=(x_1,x_2,\ldots ,x_n)$, then $\overline{x}=(1-x_1,1-x_2,\ldots ,1-x_n)$
(the subtraction is computed modulo $2$).

Recall that for $0\leq j\leq p$, $A(p,j)$ denotes the union of all cylinders on $p$ symbols with exactly $j$ $1'$s. If
$$S_{n,p} = \{\tau \in S_\infty \ : \ \tau(i)= i \ , \ 1 \leq i \leq n \text{ or }i\geq p+1\},$$
then from (\ref{223}) we get for $n\leq j\leq p-n$
$$\bigcup_{\tau\in S_{n,p}}\tau\sigma_x(C(p,j))=C(x)\cap A(p,j).$$
Moreover, the stabilizer in $S_{n,p}$ of $(x,1_{j-n}, 0_{p-n-j},\overline{x})$ is isomorphic to the product of the symmetric groups $S_{j-k}$ and $S_{p-n-j+k}$, where $k=\#(x)$. Then
$$\sum_{\tau\in S_{n,p}}\chi_{\tau\sigma_x(C(p,j))}=(j-k)!(p-n-j+k)!\chi_{C(x)\cap A(p,j)}=
\frac{(p-n)!}{\binom{p-n}{j-k}}\chi_{C(x)\cap A(p,j)}.$$
By definition (\ref{222}) of $f$, we then have
\begin{equation}\label{224}
\sum_{\tau\in S_{n,p}}\beta_{\tau \sigma_x}(f)=\sum_{j=n}^{p-n}\frac{\binom{p-n}{j}}{\binom{p-n}{j-k}}\chi_{C(x)\cap A(p,j)}.
\end{equation}
Note that  for $k\leq j\leq p-(n-k)$,
\begin{equation}\label{225}
\left\| \chi_{C(x)\cap A(p,j)} \right\|= \left\| \chi_{C(x)} \right\|\binom{p-n}{j-k}\frac{\lambda^{j-k}}{(1+\lambda)^{p-n}}
\end{equation}
Let $B(p,n)=\bigcup\limits_{j=n}^{p-n}A(p,j)$.
 By Corollary \ref{cor}, we have
\begin{align}\label{221}
\nu_\lambda( B(p,n)^c &\cap C(x))=\nu_\lambda(\bigcup_{j=k}^{n-1} (C(x)\cap A(p,j)))+\nu_\lambda\left(\bigcup_{j=p-n+1}^{j=p-(n-k)} (C(x)\cap A(p,j))\right)  \nonumber \\ &=\left[\sum_{j=k}^{n-1}\binom{p-n}{j-k}\frac{\lambda^{j-k}}{(1+\lambda)^{p-n}} +\sum_{j=p-n+1}^{p-(n-k)}\binom{p-n}{j-k}\frac{\lambda^{j-k}}{(1+\lambda)^{p-n}}\right]\|\chi_{C(x)}\| \nonumber \\
&\leq n\left( \frac{\pi(1+\lambda)}{(p-n)\lambda} \right)^{\frac{1}{2}}\|\chi_{C(x)}\|.
\end{align}
From (\ref{224}), (\ref{225}), Corollary \ref{cor} and as $k\leq n$, 
	\begin{align*}
			\| \chi_{C(x)\cap B(n,p)} & -  \lambda^{k}\sum_{\tau \in S_{n,p}} \beta_{\tau \sigma_x}(f) \|
			  \quad \leq \quad \left\| \sum_{j=n}^{p-n} \left(1 - \lambda^{k} \cdot \frac{\binom{p-n}{j}}
				{\binom{p-n}{j-k}}\right) \chi_{C(x)\cap A(n,j)} \right\| \\
			&\leq  \|\chi_{C(x)}\|  \sum_{j=n}^{p-n}\left| \binom{p-n}{j-k}\frac{\lambda^{j-k}}{(1+\lambda)^{p-n}}\quad - \quad \binom{p-n}{j} \frac{\lambda^{j}}{(1+\lambda)^{p-n}}
				\right| \\
&\leq  \|\chi_{C(x)}\|  \sum_{j=k}^{p-n}\left| \binom{p-n}{j-k}\frac{\lambda^{j-k}}{(1+\lambda)^{p-n}}\quad - \quad \binom{p-n}{j} \frac{\lambda^{j}}{(1+\lambda)^{p-n}}
				\right| \\	
				& \leq  2k\|\chi_{C(x)}\|\left(\frac {\pi (1+\lambda)}{(p-n)\lambda} \right)^{\frac{1}{2}} \leq  2n\|\chi_{C(x)}\|\left(\frac {\pi (1+\lambda)}{(p-n)\lambda} \right)^{\frac{1}{2}}
\end{align*}
By (\ref{221}) and the choice of $p$, we get
\begin{align*}
			\| \chi_{C(x)} - & \lambda^{k}\sum_{\tau \in S_{n,p}} \beta_{\tau \sigma_x}(f) \| \\
			 & \quad \leq 	\left\| \chi_{C(x) \cap B(n,p)^c}\right\|+\left\| \chi_{C(x) \cap B(n,p)}-\lambda^{k}\sum_{\tau \in S_{n,p}} \beta_{\tau \sigma_x}(f) \right\| \\
			  & \quad  \leq  \varepsilon \|\chi_{C(x)}\|		
			  \end{align*}
The proof of the lemma is completed by letting $S$ be the (disjoint) union of $S_{n,p}\sigma_x$, $x\in X^n$ and
$$a_{x,\sigma}=\left\{ \begin{array}{cc}
\lambda^{\# (x)} & \text{ if }\sigma\in S_{n,p}\sigma_x\\
0  & \text{ if not.\ \ \ \ \ \ \ \ \ }
\end{array}
\right.$$
\end{proof}

\bigskip

\begin{proof}[Proof of Theorem \ref{AT}] 

Consider an arbitrary finite collection $f_1, f_{2},\ldots, f_{m}$ of functions from $L^{1}_{+}(X,\nu_\lambda)$ and $\varepsilon>0$. Choose $\eta>0$ such that $(1+\|f_j\|)\cdot\eta\leq \varepsilon$ for all $j$. By Lemma \ref{l3}, there exists $n\geq 1$ and  nonnegative coefficients $\lambda_{j,x}$ such that
\[\|f_{j}\|=\sum_{x\in X^{n}}\lambda_{j,x}\|\chi_{C(x)}\| 
  \text{ \ \  and \ \ } \|f_{j}-\sum_{x\in X^{n}}\lambda_{j,x}\chi_{C(x)}\|\leq\eta, \quad  j=1,2,\ldots,m.\]
By Lemma \ref{23} there exists $f\in L^{1}_{+}(X,\nu_\lambda)$,  a finite set $\mathcal{S}$ of elements of
$S_{\infty}$ and reals $a_{x,\sigma}\geq 0$, $x\in X^{n}$, $\sigma\in \mathcal{S}$ which satisfy
$$ \|\chi_{C(x)} - \sum_{\sigma\in S}a_{x,\sigma} \beta_{\sigma}(f) \| \ \leq \ \eta\|\chi_{C(x)}\|\quad
		\text{for all} \ x \in X^{n}.$$
Then, for $j=1,2,\ldots, n$ we obtain
		\begin{align*}
\|f_j-&\sum_{x\in X^{n}}\sum_{\sigma\in\mathcal{S}}\lambda_{j,x}a_{x,\sigma}\beta_{\sigma}(f)\|\\&\leq\|f_{j}-\sum_{x\in X^{n}}\lambda_{j,x}\chi_{C(x)}\|+ \sum_{x\in X^{n}}\lambda_{j,x} \|\chi_{C(x)} - \sum_{\sigma\in\mathcal{S}} a_{x,\sigma}\beta_{\sigma}(f) \|\\
&\leq \eta\cdot\left(1+\sum_{x\in X^{n}} \lambda_{j,x}\|\chi_{C(x)}\|\right)=\eta(1+\|f_j\|)\leq \varepsilon.
\end{align*}
Hence the action of $\mathcal{S}_{\infty}$ on $(X,\nu_\lambda)$ is AT.\end{proof}

\bigskip

\section{Generalizations}
In this section, we generalize Theorem \ref{AT} for a larger class of probability measures on $X=\prod_{k\geq 1}\{0,1\}$, and show that the associated flow of $S_\infty$ is AT for this class of measures.  Let $(L_n)_{n\geq 1}$ be a sequence of positive integers and $(\lambda_n)_{n\geq 1}$ be a sequence of real numbers in $(0,1]$. Then $\nu=\nu(L_n,\lambda_n)$ will denote the product measure on $X=\prod_{k\geq 1}\{0,1\}=\prod\limits_{n\geq 1}\prod\limits_{1}^{L_n}\{0,1\}$,
given by
$$\nu(L_n,\lambda_n)=\otimes_{n\geq 1}\nu_{\lambda_n}^{\otimes L_n},$$
where $\nu_{\lambda_n}(0)=\frac{1}{1+\lambda_{n}}$ and $ \nu_{\lambda_n}(1)=\frac{\lambda_{n}}{1+\lambda_{n}}$.
In this section we will assume that
\begin{equation}\label{cond}
\sup\{L_{n}\lambda_{n} : n\geq 1\}=\infty.
\end{equation}

Recall (see \cite{SV} or \cite{AP}) that $\nu(L_n,\lambda_n)$ is a $S_\infty$-ergodic, nonatomic measure on $X$ if and only if $\sum_{n\geq 1}L_n\lambda_n=\infty$.

The following lemma and its proof are a generalization of Lemma \ref{23}. We will use the following notations: set $k_0=0$ and for $n\geq 1$, $k_n=\sum\limits_{k=1}^n L_k$ and recalling that for $n\geq 1$, $X^n=\prod\limits_{k=1}^n\{0,1\}$, we set for $x\in X^{k_n}$ and $z\in X^{L_l}$, $l\geq 1$,
$$C(x)=\{y\in X :  y_j=x_j\text{ for }1\leq j\leq k_n\}$$
$$C(z(L_l))=\{y\in X: y_{k_{l-1}+i}=z_i \text{ for }1\leq i\leq L_l \} $$
and for $m>n$
$$C(x, z(L_m))=C(x)\cap C(z(L_m)).$$
\begin{lemma}\label{33}
Let $(X,\nu)$ be as above. Then for each $n\geq 1$ and $\varepsilon>0$, there exist a function $f\in  L^1_+(X,\nu)$, a finite subset $S\subset S_{\infty}$ and nonnegative constants $a_{x,\sigma}$, for $x\in X^{k_n}$ and $\sigma\in S$ such that
$$ \|\chi_{C(x)} - \sum_{\sigma\in S}a_{x,\sigma} \beta_{\sigma}(f) \| \ \leq \ \varepsilon\|\chi_{C(x)}\|$$
for all $x\in X^{k_n} $.
\end{lemma}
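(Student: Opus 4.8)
The plan is to transcribe the proof of Lemma~\ref{23} almost word for word, but to carry out the whole construction \emph{inside a single block} of coordinates, on which $\nu$ restricts to an honest Bernoulli measure, using hypothesis (\ref{cond}) to make that block as large as we please. Fix $n$ and $0<\varepsilon\le 1$, and abbreviate $k=k_n$. First I would invoke (\ref{cond}) to choose an index $l>n$ with $L_l\ge 2k$ and $L_l\lambda_l$ so large that $2k\big(\pi(1+\lambda_l)/(L_l\lambda_l)\big)^{1/2}\le \varepsilon/2$; this is possible because $\lambda_l\le 1$ makes $L_l\lambda_l\ge 32\pi k^2/\varepsilon^2$ sufficient, and $\sup_m L_m\lambda_m=\infty$ provides infinitely many such indices. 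Put $p=L_l$ and $m=k_{l-1}$, so block $l$ occupies the coordinates $m+1,\dots,m+p$, all of parameter $\lambda_l$ and all lying beyond coordinate $k$ (since $l>n$). For $k\le j\le p-k$ write $E_j:=C\big(1_k,\,(1_j0_{p-j})(L_l)\big)$, the cylinder prescribing $1$'s in the first $k$ coordinates and $1_j0_{p-j}$ in block $l$, and take
\[
f\;:=\;\sum_{j=k}^{p-k}\frac{1}{p!}\binom pj\,\chi_{E_j}\ \in\ L^1_+(X,\nu).
\]

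Next, for $x\in X^{k}$ with $\#(x)$ ones and zero-set $x^{-1}(0)$ of size $k-\#(x)$, writing $b(i)$ for the block containing coordinate $i$, I would set
\[
\sigma_x:=\prod_{i\in x^{-1}(0)}\big(i,\ m+p-k+i\big),\qquad S_l:=\{\tau\in S_\infty:\ \tau(j)=j\ \text{ for every }j\ \text{ outside block }l\}\cong S_p.
\]
Exactly as for (\ref{223})--(\ref{224}) one checks that $\sigma_x(E_j)=C\big(x,\,(1_j0_{p-k-j}\,\overline{x})(L_l)\big)$, a cylinder with $r:=j+k-\#(x)$ ones in block $l$, whence $\bigcup_{\tau\in S_l}\tau\sigma_x(E_j)=C(x)\cap A_l(p,r)$ (with $A_l(p,r)$ the event ``block $l$ has exactly $r$ ones'') and, counting the stabiliser $S_r\times S_{p-r}$, $\sum_{\tau\in S_l}\chi_{\tau\sigma_x(E_j)}=\tfrac{p!}{\binom pr}\chi_{C(x)\cap A_l(p,r)}$. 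The elements of $S_l$ preserve $\nu$ (they permute coordinates of the common parameter $\lambda_l$), while each transposition in $\sigma_x$ exchanges a coordinate of parameter $\lambda_{b(i)}$ with one of parameter $\lambda_l$; a coordinate-by-coordinate computation shows that $d(\nu\circ\sigma_x^{-1})/d\nu$ is the \emph{constant} $\prod_{i\in x^{-1}(0)}\lambda_{b(i)}/\lambda_l=:\rho_x$ on $\sigma_x(E_j)$, independent of $j$. Hence $\beta_{\tau\sigma_x}(\chi_{E_j})=\rho_x\,\chi_{\tau\sigma_x(E_j)}$, and with the weight $a_x:=\rho_x^{-1}\lambda_l^{-(k-\#(x))}=\prod_{i\in x^{-1}(0)}\lambda_{b(i)}^{-1}\ge 0$ one gets
\[
a_x\sum_{\tau\in S_l}\beta_{\tau\sigma_x}(f)\;=\;\lambda_l^{-(k-\#(x))}\sum_{j=k}^{p-k}\frac{\binom pj}{\binom p{\,j+k-\#(x)}}\,\chi_{C(x)\cap A_l(p,\,j+k-\#(x))}.
\]

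From here the estimates are those of the chain of displays following (\ref{225}), with $\lambda_l$ in place of $\lambda$ and $p=L_l$ in place of $p-n$. Writing $s=k-\#(x)$ and using $\|\chi_{C(x)\cap A_l(p,r)}\|=\|\chi_{C(x)}\|\binom pr\lambda_l^r/(1+\lambda_l)^p$ and the disjointness of the sets $A_l(p,r)$, the $L^1$-distance from the right-hand side above to $\chi_{C(x)\cap B}$, where $B:=\bigcup_{j=k}^{p-k}A_l(p,j+s)$, equals $\|\chi_{C(x)}\|\sum_{j=k}^{p-k}\big|\binom pj\lambda_l^{j}-\binom p{j+s}\lambda_l^{j+s}\big|/(1+\lambda_l)^p$, which by Corollary~\ref{cor}(2) (applied inside block $l$, with its parameter $\lambda_l$) is at most $2s\big(\pi(1+\lambda_l)/(p\lambda_l)\big)^{1/2}\|\chi_{C(x)}\|\le\tfrac\varepsilon2\|\chi_{C(x)}\|$. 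Moreover $C(x)\setminus B$ meets only $2k$ of the sets $A_l(p,r)$, each of measure $\le\|\chi_{C(x)}\|\big(\pi/(p\lambda_l)\big)^{1/2}$ by Lemma~\ref{l1} inside block $l$, so $\|\chi_{C(x)}-\chi_{C(x)\cap B}\|\le 2k\big(\pi/(p\lambda_l)\big)^{1/2}\|\chi_{C(x)}\|\le\tfrac\varepsilon2\|\chi_{C(x)}\|$. Adding the two bounds gives $\big\|\chi_{C(x)}-a_x\sum_{\tau\in S_l}\beta_{\tau\sigma_x}(f)\big\|\le\varepsilon\|\chi_{C(x)}\|$. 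To conclude I would argue as at the end of Lemma~\ref{23}: the cosets $S_l\sigma_x$, $x\in X^{k}$, are pairwise disjoint, since for $x\ne x'$ the permutation $\sigma_x\sigma_{x'}$ moves some coordinate $\le k$ and so is not in $S_l$; hence we may take $S=\bigsqcup_{x\in X^{k}}S_l\sigma_x$ and set $a_{x,\sigma}=a_x$ for $\sigma\in S_l\sigma_x$, $a_{x,\sigma}=0$ otherwise.

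The one genuinely new ingredient compared with Lemma~\ref{23} is that $\nu$ is no longer $S_\infty$-invariant, so the translates $\beta_\sigma(f)$ carry Radon--Nikodym cocycle factors; the step I expect to need the most care is verifying that, on each cylinder occurring in the relevant sum, this factor is constant and \emph{independent of the summation index $j$}, so that one fixed $f$ still works simultaneously for all $x$. Once that cocycle is identified and seen to be cancelled exactly by the explicit weight $a_x=\prod_{i\in x^{-1}(0)}\lambda_{b(i)}^{-1}$, the rest is just the proof of Lemma~\ref{23} transplanted into a block with $L_l\lambda_l$ chosen (thanks to (\ref{cond})) so large that Lemma~\ref{l1} and Corollary~\ref{cor}, applied with $\lambda$ replaced by $\lambda_l$, force both error terms below $\varepsilon/2$.
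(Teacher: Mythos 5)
Your proposal is correct and follows essentially the same route as the paper's own proof: choose a block $l$ with $L_l\lambda_l$ large via (\ref{cond}), build $f$ from cylinders prescribing $1$'s on the first $k_n$ coordinates and a sorted pattern in block $l$, move the zeros of $x$ into the tail of that block by transpositions, average over the block's symmetric group, observe that the Radon--Nikodym derivative is the constant $\prod_{i\in x^{-1}(0)}\lambda_{b(i)}/\lambda_l$, and close with Lemma \ref{l1} and Corollary \ref{cor}. The only differences are cosmetic (a reindexing of the coefficients of $f$, hence a slightly different normalizing constant $a_x$, and an $\varepsilon/2+\varepsilon/2$ split in place of the paper's $3k_n(\pi(1+\lambda_m)/(L_m\lambda_m))^{1/2}\le\varepsilon$).
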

\begin{proof} Let $n\geq 1$ and $0<\varepsilon< 1$. By (\ref{cond}) we can choose an integer $m\geq 1$ such that
\begin{equation}\label{30}
3k_n\left(\frac{\pi(1+\lambda_m)}{L_m\lambda_m} \right) ^{\frac{1}{2}}\leq\varepsilon.
\end{equation}
Notice that $k_n < L_m$. For $0\leq j\leq L_m-k_n$, and $\widetilde{z}_j=(1_j,0_{L_m-j})\in X^{L_m}$, set
\begin{equation*}\label{cyl1}
C(n,m,j)=C(1_{k_n})\cap C(\widetilde{z}_j(L_m))
\end{equation*}
and
\begin{equation*}
C(n,m)=\bigcup_{j=0}^{L_m-k_n}C(n,m,j).
\end{equation*}
We then define the function $f\in L^1_+(X,\nu)$ by
\begin{equation}\label{function}
f \quad = \quad \sum_{j=0}^{L_{m}-k_n}\frac{\binom{L_{m}}{k_n+j}}{L_m!}\ \chi_{C(n,m,j)},
\end{equation}
whose support is $C(n,m)$.
For an arbitrary $x\in X^{k_n}$, let $x^{-1}(0)$ denote the set $\{j\in\{1,2,\ldots, k_n\}: x_j=0\}$ and consider the permutation $\sigma_x$ given by
$$\sigma_x=\prod_{j\in x^{-1}(0)}(j, k_m-k_n+j)$$
where $(a,b)$ denotes the transposition between $a$ and $b$.
For all $0\leq j\leq L_m-k_n$, set
$$z_j=(1_j,0_{L_m-k_n-j},\overline{x}),$$
where $\overline{x}=(1-x_1,1-x_2,\ldots,1-x_{k_n})\in X^{k_n}$ (the subtraction is computed modulo $2$). Then by construction
\begin{equation}
\sigma_x(C(n,m,j))=  C(x, z_j(L_m))
\end{equation}
and for every $y\in C(x,z_j(L_m))$,
\begin{align*}
\frac{d\nu\circ \sigma_x^{-1}}{d\nu}(y) & =\quad \prod_{k\geq 1}\ \ \frac{\nu_k(\sigma_x^{-1}(y)_k)}{\nu_k(y_k)}\\ & =\prod_{j\in x^{-1}(0)}\frac{\nu_j(y_{k_m-k_n+j})}{\nu_j(y_j)}\frac{\nu_{k_m-k_n+j}(y_j)}{\nu_{k_m-k_n+j}(y_{k_m-k_n+j})}\\
 & =\prod_{j\in x^{-1}(0)}\frac{\nu_j(1)}{\nu_j(0)}\frac{\nu_{k_m-k_n+j}(0)}{\nu_{k_m-k_n+j}(1)}=
\prod_{j\in x^{-1}(0)}\frac{\nu_j(1)}{\nu_j(0)}\lambda_m^{-1}
\end{align*}
and therefore is constant on $\sigma_x(C(n,m))$; let $D_x$ be this constant.
By definition of $f$ we have
\begin{align}\label{301}
\beta_{\sigma_x}(f) = \sum_{j=0}^{L_{m}-k_n}\frac{\binom{L_m}{k_n+j}}{L_{m}!}
\chi_{C(x, z_j(L_m))}D_x.
\end{align}
Let $S(L_m)$ be the subgroup of $S_\infty$ of permutations $\sigma$ such that $\sigma(i)=i$, for $i\notin \{k_{m-1}+1,\ldots, k_m\}$. For $0\leq l\leq L_m$, set
\begin{align*}
A(x,m,l)=C(x)\bigcap \left\{y\in X: \text{card}\{i: k_{m-1}+1\leq i\leq k_m, \  y_{i}=1 \}=l\right\}.
\end{align*}
Then it is easy to observe that for $0\leq j\leq L_m-k_n$
$$\bigcup_{\tau\in S(L_m)}\tau (C(x,z_j(L_m))=A(x,m,k_n-k+j),$$
where $k=\sum\limits_{i=1}^{k_n}x_i=\text{card}\{i : \ 1\leq i\leq k_n, \  x_i=1 \}$.
As the stabilizer in $S(L_m)$ of $z_{j}$ is isomorphic to $S_{j+k_n-k}\times S_{L_m-k_n-j+k}$, we have for $0\leq j\leq L_m-k_n$
\begin{align*}
\sum_{\tau\in S(L_m)}\chi_{\tau(C(x,z_j(L_m)))} & =(k_n-k+j)!(L_m-k_n-j+k)!\chi_{A(x,m,k_n-k+j)}\\
& =\frac{L_m !}{\binom {L_m}{k_n-k+j}}\chi_{A(x,m,k_n-k+j)}.
\end{align*}
Any $\tau\in S(L_m)$ being $\nu$-measure preserving, we get
\begin{align}\label{333}
\sum_{\tau\in S(L_m)} \beta_{\tau\sigma_x}(f)\quad  =  \sum_{j=0}^{L_m-k_n}
D_x	\frac{\binom{L_m}{k_n+j}}{\binom{L_m}{k_n-k+j}} \chi_{A(x,m,k_n-k+j)}.
\end{align}
Note that  for $0\leq j\leq L_m$,
\begin{equation}\label{31}
\left\|\chi_{A(x,m,j)} \right\|= \left\| \chi_{C(x)} \right\|\binom{L_m}{j}\frac{\lambda_m^{j}}{(1+\lambda_m)^{L_m}}.
\end{equation}
Let $B(x,m)=\bigcup\limits_{j=k_n-k}^{L_m-k_n}A(x,m,j)=\bigcup\limits_{j=0}^{L_m-k_n}A(x,m,k_n-k+j)$. From (\ref{333}) and (\ref{31}), we get easily
\begin{align*}
\| \chi  _{C(x)\cap  B(x,m)} &  -  \frac{\lambda_{m}^{k}}{D_x} \sum_{\tau\in S(L_m)} \beta_{\tau \sigma_x} (f) \|  \\
& \leq\sum_{j=0}^{L_m-k_n}\left|\binom{L_{m}}{k_n-k+j}\frac{\lambda_{m}^{k_n-k+j}}{(1+\lambda_{m})^{L_{m}}} - \binom{L_m}{k_n+j}\frac{\lambda_{m}^{k_n+j}}{(1+\lambda_{m})^{L_{m}}}	 \right|\|\chi_{C(x)}\| \nonumber
\end{align*}
Then by Corollary \ref{cor},
\begin{align}\label{rel1}
	\| \chi_{C(x)\cap B(x,m)}  -  \frac{\lambda_{m}^{k}}{D_x} \sum_{\tau\in S(L_m)} \beta_{\tau \sigma_x} (f) \|
	\leq 2k_n\left(\frac{\pi(1+\lambda_m)}{L_m\lambda_m} \right) ^{\frac{1}{2}}\|\chi_{C(x)}\|
\end{align}
and
\begin{align}\label{rel2}
\| \chi_{C(x)\cap B(x,m)^c}\|=
& \  \sum_{j=0}^{k_n-k-1} \|\chi_{A(x,m,j)}\| + \sum_{j=L_m-k+1}^{L_m} \|\chi_{A(x,m,j)}\| \nonumber \\
& \leq k_n\left(\frac{\pi(1+\lambda_m)}{L_m\lambda_m} \right) ^{\frac{1}{2}}\|\chi_{C(x)}\|.
\end{align}
Therefore, by (\ref{30}), (\ref{rel1}) and (\ref{rel2}), we get
\begin{align*}
\| \chi_{C(x)}  -  \frac{\lambda_{m}^{k}}{D_x} \sum_{\tau\in S(L_m)} \beta_{ \tau\sigma_x} (f) \|  \leq   3k_n\left(\frac{\pi(1+\lambda_m)}{L_m\lambda_m} \right) ^{\frac{1}{2}}\|\chi_{C(x)}\|\leq \varepsilon \|\chi_{C(x)}\|.
\end{align*}

We let $S$ be the (disjoint) union of $S(L_m)\sigma_x$, $x\in X^{n}$ and we define
$$a_{x,\sigma}=\left\{ \begin{array}{cc}
{\lambda^{k}_m}/D_x & \text{ if }\sigma=\tau\sigma_x, \  \tau\in S(L_m)\\
0  & \text{ otherwise.\ \ \ \ \ \ \ \ \ }
\end{array}
\right.$$
\end{proof}

\noindent By Lemma \ref{33} and Lemma \ref{l3}, we then get:

\begin{theorem}
Let $\nu=\nu(L_n,\lambda_n)$ be a product probability measure on $X$ satisfying
\begin{equation*}
\sup\{L_{n}\lambda_{n} : n\geq 1\}=\infty.
\end{equation*} Then the natural action of $S_\infty$ on $(X,\nu)$ is AT.
\end{theorem}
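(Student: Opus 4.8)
The plan is to deduce the theorem from Lemma~\ref{33} and Lemma~\ref{l3} by the same $L^1$-bookkeeping used to pass from Lemmas~\ref{l3} and \ref{23} to Theorem~\ref{AT}. Fix a finite collection $f_1,\dots,f_m\in L^1_+(X,\nu)$ and $\varepsilon>0$, and choose $\eta>0$ with $(1+\|f_j\|)\,\eta\le\varepsilon$ for all $j$. Since $L_k\ge 1$ for every $k$, the partial sums $k_n=\sum_{k=1}^n L_k$ tend to infinity, and the cones $\mathscr S^{+}_\ell$ increase with $\ell$; so Lemma~\ref{l3}, applied with tolerance $\eta/2$ and followed by the routine adjustment of one coefficient to equalise $L^1$-norms, yields an index $n$ and nonnegative reals $\lambda_{j,x}$ ($x\in X^{k_n}$) with
\[
\|f_j\|=\sum_{x\in X^{k_n}}\lambda_{j,x}\,\|\chi_{C(x)}\|
\qquad\text{and}\qquad
\Big\|f_j-\sum_{x\in X^{k_n}}\lambda_{j,x}\,\chi_{C(x)}\Big\|\le\eta,\qquad j=1,\dots,m.
\]

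Next I would invoke Lemma~\ref{33} with this same $n$ and with $\varepsilon$ replaced by $\eta$, obtaining a function $f\in L^1_+(X,\nu)$, a finite set $S\subset S_\infty$ and constants $a_{x,\sigma}\ge 0$ ($x\in X^{k_n}$, $\sigma\in S$) with $\|\chi_{C(x)}-\sum_{\sigma\in S}a_{x,\sigma}\beta_\sigma(f)\|\le\eta\,\|\chi_{C(x)}\|$ for every $x\in X^{k_n}$. Using $\lambda_{j,x}a_{x,\sigma}$ as the coefficient of $\beta_\sigma(f)$ in the approximation of $f_j$, the triangle inequality gives
\begin{align*}
\Big\|f_j-\sum_{x\in X^{k_n}}\sum_{\sigma\in S}\lambda_{j,x}a_{x,\sigma}\,\beta_\sigma(f)\Big\|
&\le\Big\|f_j-\sum_{x\in X^{k_n}}\lambda_{j,x}\,\chi_{C(x)}\Big\|
+\sum_{x\in X^{k_n}}\lambda_{j,x}\,\Big\|\chi_{C(x)}-\sum_{\sigma\in S}a_{x,\sigma}\beta_\sigma(f)\Big\|\\
&\le\eta\Big(1+\sum_{x\in X^{k_n}}\lambda_{j,x}\,\|\chi_{C(x)}\|\Big)=\eta\,(1+\|f_j\|)\le\varepsilon
\end{align*}
for each $j=1,\dots,m$, which is exactly the AT inequality, with witnesses the single function $f$, the permutations in $S$, and the scalars $\lambda_{j,x}a_{x,\sigma}\ge 0$.

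The point to stress is that no genuine obstacle remains at this stage: all the analytic content --- in particular the use of Corollary~\ref{cor} and the construction of the single averaging function $f$ out of the ``staircase'' functions $\chi_{C(n,m,j)}$ --- has already been carried out in Lemma~\ref{33}, and the step above is the same purely formal manipulation of the $L^1$-norm used in the Bernoulli case. The hypothesis $\sup_n L_n\lambda_n=\infty$ enters only inside Lemma~\ref{33}, to pick a block index $m$ with $3k_n\bigl(\pi(1+\lambda_m)/(L_m\lambda_m)\bigr)^{1/2}\le\varepsilon$; the unboundedness of $(L_n\lambda_n)_n$ is precisely what makes such an $m$ available for every $n$ and every $\varepsilon>0$. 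Finally I would remark that the Bernoulli measures $\nu_\lambda$ of Section~2 also fall under this theorem once one regroups the coordinates into blocks of growing length, so that $L_n\lambda_n=L_n\lambda\to\infty$; hence this statement contains Theorem~\ref{AT}.
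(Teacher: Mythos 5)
Your argument is correct and is exactly the route the paper intends: the paper states this theorem with the one-line justification ``By Lemma~\ref{33} and Lemma~\ref{l3}, we then get,'' and the bookkeeping you supply is the same triangle-inequality computation used to derive Theorem~\ref{AT} from Lemmas~\ref{l3} and~\ref{23}, with the correct observation that one may take the cylinder level in Lemma~\ref{l3} to be of the form $k_n$ since the cones $\mathscr S^{+}_\ell$ increase. Your closing remarks on where the hypothesis $\sup_n L_n\lambda_n=\infty$ enters and on recovering the Bernoulli case are accurate as well.
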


\begin{remark}\label{remk}
Keeping the notation of Lemma \ref{33} and its proof, note that for any $\sigma\in S\subset S_\infty$, the Radon-Nikodym derivative $\frac{d\nu\circ \sigma^{-1}}{d\nu}$ is constant on $\sigma\left(C(n,m)\right)$ where $C(n,m)$ is the support of $f$. Indeed, $S$ is the disjoint union of $S(L_m)\sigma_x$, for $x\in X^n$, moreover any $\tau\in S(m)$ is $\nu$-measure preserving and
$$\frac{d\nu\circ\sigma_{x}^{-1}}{d\nu}(y)=D_x=\prod_{j\in x^{-1}(0)}\frac{\nu_i(1)}{\nu_i(0)}\lambda_m^{-1} \text{ for }  y\in \sigma_x(C(n,m)).$$
\end{remark}
\bigskip

Recall that the associated flow of a nonsingular action of a countable discrete group $G$ on a Lebesgue space $(X,\nu)$ is the Mackey range of the Radon-Nikodym cocycle of the $G$-action on $(X,\nu)$. More precisely, on $(X\times\mathbb{R},\ \nu\otimes e^{u}du)$, let us denote by $\gamma$ the action of $G$ and by $\rho$ the action of $\mathbb{R}$, given by
\begin{align*}
\gamma_{g}(x,t)&=\left(gx,t-\log\frac{d\mu\circ g}{d\mu}(x)\right)
\end{align*}
and
\begin{align*}
\rho_s(x,t)&=(x,t+s).
\end{align*}
As $\rho$ commutes with the $G$-action $\gamma$, it induces an $\mathbb{R}$-action on the ergodic decomposition of the (infinite measure preserving) action $\gamma$, which is the associated flow of the action of $G$ on $(X,\nu)$.
Let $$\widetilde{\gamma}_{g,s}(x,t)=\left(gx,t+s-\log\frac{d\mu\circ g}{d\mu}(x)\right)$$
be the skew product action $\widetilde{\gamma}$ of $G\times\mathbb{R}$ on $(X\times \mathbb{R}, \nu\otimes \mu)$ where $d\mu=e^udu$.

As by \cite{CW1}, Remark 2.4, any factor action of an AT action is AT, to prove that the associated flow of a $G$-action on $(X,\nu)$ is AT it is sufficient to show that the above skew product action $\widetilde{\gamma}$ is AT.

We introduce in Definition \ref{atilde} a strong version of AT, which we denote by $\widetilde{AT}$, and show in Proposition 3.5 that if a nonsingular $G$-action is $\widetilde{AT}$, its associated skew product $(G\times\mathbb{R})$-action is AT.
\begin{definition}\label{atilde}
Let $(X,\nu,G)$ be a nonsingular action of a countable discrete group $G$ on a Lebesgue space $(X,\nu)$. Then the action is $\widetilde{AT}$ if given $n<\infty$, functions $f_1,f_2, \ldots, f_n \in L^{1}_{+}(X,\nu)$ and $\varepsilon> 0,$
there exists a function $f\in L^{1}_{+}(X,\nu)$, elements $ g_1,\dots,g_m \ \in\ G$, $s_1,\ldots, s_m>0$ for some
$m<\infty,$ and $\lambda_{j,k}\geq 0$, for $k=1,2,\dots,m$ and $j=1,2,\ldots,n$ such that
$$ \| f_j\ -\ \sum_{k=1}^m \lambda_{j,k}\beta_{g_{k}}(f) \| \leq\varepsilon,\quad  j=1,\dots,n$$
and
$$\frac{d\nu \circ g_k^{-1}}{d\nu}(x)=s_k, \ \nu \text{ a.e. }x\in g_k\left(\text{Supp}(f)\right),\quad  k=1,\ldots, m,$$
where $\text{Supp}(f)$ denotes the support of $f$.
\end{definition}
\begin{proposition}\label{skew}
Let $(X,\nu, G)$ be a nonsingular  $\widetilde{AT}$ action of a countable group $G$ on a Lebesgue space $(X,\nu)$. Let $\mu$ denote the measure on $\mathbb{R}$ given by $d\mu=e^{u}du$. Then the skew product action $\widetilde{\gamma}$ of $G\times\mathbb{R}$ on $(X\times \mathbb{R}, \nu\times \mu)$ is AT.
\end{proposition}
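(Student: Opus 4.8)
The plan is to push the $\widetilde{AT}$ property of $(X,\nu,G)$ up to the skew product $\widetilde\gamma$ on $(X\times\mathbb R,\nu\times\mu)$, exploiting two structural features of this extension: $\gamma_g=\widetilde\gamma_{(g,0)}$ preserves $\nu\times\mu$ for every $g\in G$ while $\rho_s=\widetilde\gamma_{(e,s)}$ rescales $\nu\times\mu$ by the constant $e^{-s}$, so that every operator $\beta_{(g,t)}$ has \emph{constant} Radon--Nikodym factor $e^{-t}$ and every $\beta$ is an $L^1$-isometry; and the $\widetilde{AT}$ hypothesis supplies elements $g_k$ whose Radon--Nikodym cocycle is constant ($=s_k>0$) on $g_k(\operatorname{Supp} f)$, which I will use to arrange that the $\mathbb R$-coordinate of $\widetilde\gamma_{(g_k,-\log s_k)}$ is not moved at all over the support of the model function. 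Concretely, given $F_1,\dots,F_n\in L^1_+(X\times\mathbb R,\nu\times\mu)$ and $\varepsilon>0$, I would first fix a small $\ell>0$ and average each $F_j$ over the cells of the grid $\{[p\ell,(p+1)\ell)\}_{p\in\mathbb Z}$ in the $\mathbb R$-variable; truncating to finitely many cells gives $\widetilde F_j=\sum_p\phi_{j,p}\otimes\chi_{[p\ell,(p+1)\ell)}$ with $\phi_{j,p}\in L^1_+(X,\nu)$ and $\|F_j-\widetilde F_j\|<\varepsilon/2$. Since $\beta_{(e,p\ell)}(\phi\otimes\chi_{[0,\ell)})=e^{-p\ell}\,\phi\otimes\chi_{[p\ell,(p+1)\ell)}$ and $\beta_{(e,p\ell)}$ is an $L^1$-isometry, it suffices to find one $F\in L^1_+(X\times\mathbb R,\nu\times\mu)$ together with group elements and nonnegative coefficients that $\varepsilon'$-approximate the finite family $\{\phi_{j,p}\otimes\chi_{[0,\ell)}\}$ in $L^1(\nu\times\mu)$: one then precomposes with the $\rho_{p\ell}$, multiplies the coefficients by $e^{p\ell}$ and sums over $p$, absorbing the bounded factors $e^{p\ell}$ into the choice of $\varepsilon'$.

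Applying the $\widetilde{AT}$ hypothesis to the finite family $\{\phi_{j,p}\}$ yields $f\in L^1_+(X,\nu)$, elements $g_1,\dots,g_m\in G$, reals $s_1,\dots,s_m>0$ and coefficients $\lambda_{(j,p),k}\geq 0$ with $\bigl\|\phi_{j,p}-\sum_k\lambda_{(j,p),k}\beta_{g_k}(f)\bigr\|_{L^1(\nu)}\leq\varepsilon'$ and $\tfrac{d\nu\circ g_k^{-1}}{d\nu}\equiv s_k$ on $g_k(\operatorname{Supp} f)$. I would take $F=f\otimes\chi_{[0,\ell)}$. The heart of the argument is the identity
\[
\beta_{(g_k,-\log s_k)}(F)\;=\;\beta_{g_k}(f)\otimes\chi_{[0,\ell)} .
\]
To see it: the cocycle identity together with the hypothesis on $g_k(\operatorname{Supp} f)$ gives $\log\tfrac{d\nu\circ g_k}{d\nu}(x)=-\log s_k$ for $\nu$-a.e.\ $x\in\operatorname{Supp} f$; hence on $\operatorname{Supp} f\times[0,\ell)$ the transformation $\widetilde\gamma_{(g_k,-\log s_k)}$ is exactly $(x,u)\mapsto(g_kx,u)$, so $F\circ\widetilde\gamma_{(g_k,-\log s_k)}^{-1}(x,u)=f(g_k^{-1}x)\chi_{[0,\ell)}(u)$ for a.e.\ $(x,u)$; and since the Radon--Nikodym factor of $\widetilde\gamma_{(g_k,-\log s_k)}$ is the constant $e^{\log s_k}=s_k$ while $\beta_{g_k}(f)=s_k\,(f\circ g_k^{-1})$ (because $f\circ g_k^{-1}$ is supported in $g_k(\operatorname{Supp} f)$), the identity follows.

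Using this identity, $\bigl\|\phi_{j,p}\otimes\chi_{[0,\ell)}-\sum_k\lambda_{(j,p),k}\beta_{(g_k,-\log s_k)}(F)\bigr\|_{L^1(\nu\times\mu)}=\mu([0,\ell))\,\bigl\|\phi_{j,p}-\sum_k\lambda_{(j,p),k}\beta_{g_k}(f)\bigr\|_{L^1(\nu)}\leq(e^\ell-1)\varepsilon'$. Inserting the flow elements $\rho_{p\ell}$ and the weights $e^{p\ell}$ as in the reduction above and choosing $\varepsilon'$ small enough relative to $\varepsilon$, $\ell$ and the number of cells, each $F_j$ is then approximated within $\varepsilon$ by a finite nonnegative combination of the functions $\beta_{(g_k,\,p\ell-\log s_k)}(F)$, where $(g_k,\,p\ell-\log s_k)\in G\times\mathbb R$ and the group elements do not depend on $j$; this is exactly the definition of AT for $\widetilde\gamma$.

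The only genuine content is the choice of the $\mathbb R$-translation $-\log s_k$ in the key identity, which makes the skew product collapse, over $\operatorname{Supp} f$, to the $(\nu\times\mu)$-preserving map $(x,u)\mapsto(g_kx,u)$ with constant Jacobian; this is precisely the place where the two non-vacuous clauses of Definition \ref{atilde} (that $s_k>0$ and that the Radon--Nikodym derivative is \emph{literally constant}, not merely close to constant, on $g_k(\operatorname{Supp} f)$) are used, and it is what converts an $L^1(\nu)$-approximation into an $L^1(\nu\times\mu)$-approximation with no loss beyond the harmless scalar $\mu([0,\ell))$. Everything else — the density reduction in the $\mathbb R$-direction and the bookkeeping of translations and weights — is routine.
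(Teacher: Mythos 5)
Your proof is correct and follows essentially the same route as the paper's: the crux in both arguments is the identity $\beta_{(g_k,-\log s_k)}(f\otimes h)=\beta_{g_k}(f)\otimes\rho_t(h)$, which uses precisely the constancy of $\tfrac{d\nu\circ g_k^{-1}}{d\nu}$ on $g_k(\mathrm{Supp}(f))$ supplied by the $\widetilde{AT}$ hypothesis to cancel the Radon--Nikodym cocycle in the $\mathbb{R}$-coordinate. The only difference is in the bookkeeping of the $\mathbb{R}$-direction: the paper reduces to simple tensors $f_j\otimes f_j'$ and invokes AT of the transitive translation flow on $(\mathbb{R},e^u du)$ abstractly, whereas you reduce to finite sums of tensors over a grid of intervals and write the translations and weights explicitly --- your version is, if anything, slightly more careful about the density step that the paper dispatches with ``standard approximation arguments.''
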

\begin{proof} Let $0<\varepsilon<1$ and consider $n$ nonnegative functions $\widetilde{f}_j\in L^{1}_+(X\times\mathbb{R},  \nu\otimes\mu)$,  $j=1,2,\ldots, n$. By standard approximation arguments, we can assume that $\widetilde{f}_j=f_j\otimes f'_j$, where $f_j\in L^1_+(X,\nu)$ and $f'_j\in  L^1_+(\mathbb{R},\mu)$. By assumption, there exist $f\in  L^1_+(X,\nu)$, $g_1,\ldots, g_m\in G$, $s_1,\ldots, s_m\in \mathbb{R}_+^*$ for some $m<\infty$, and $\lambda_{j,k}\in \mathbb{R}_+$, for $k=1,\ldots, m$ and  $j=1,\ldots, n$ such that
\begin{align*}
\|f_j-\sum_{k=1}^m \lambda_{j,k}\beta _{g_k}(f)\|\leq \varepsilon\|f_j\|,  \  j=1,\ldots,n
\end{align*}
and
\begin{align*}
\frac{d\nu \circ g_k^{-1}}{d\nu}(x)=s_k, \ \nu \text{ - a.e. }x\in g_k\left(\text{Supp}(f)\right), \  k=1,\ldots, m.
\end{align*}
As the action $\rho$ by translation on $(\mathbb{R}, \mu)$ is transitive and therefore AT, there exists $f'\in L^{1}_+(\mathbb{R}, \mu)$, $t_1,\ldots, t_p\in\mathbb{R}$ for some $p<\infty$ and $\lambda'_{j,l}\geq 0$, $l=1,\ldots, p$, $j=1,\ldots n$ such that
$$\|f'_j-\sum_{l=1}^p \lambda'_{j,l}\rho _{t_l}(f')\|\leq \varepsilon \|f'_j\|$$
where
$$\rho_s(f')(t)=f'(t-s)\frac{d\mu\circ\rho_{-s}}{d\mu}(t)=e^{-s}f'(t-s),  t\in\mathbb{R}.$$
Note first that
$$\|f_j\otimes f'_j-\sum_{k=1}^m \lambda_{j,k}\beta _{g_k}(f)\otimes f'_j\|\leq \varepsilon\|f_j\|\|f'_j\|=\varepsilon\|f_j\otimes f'_j\|$$
and that
$$\|\sum_{k=1}^m \lambda_{j,k}\beta _{g_k}(f)\otimes  f'_j-\sum_{k=1}^m \sum_{j=1}^p \lambda_{j,k}\lambda'_{j,l}\beta _{g_k}(f)\otimes \rho _{t_l}(f')\|\leq 2\varepsilon\|f_j\|\|f'_j\|=2\varepsilon\|f_j\otimes f'_j\|.$$
By definition of the $(G\times \mathbb{R})$-action $\widetilde{\gamma}$ and as for
$1\leq k\leq m$, $\frac{d\nu\circ g_k^{-1}}{d\nu}(x)=s_k$  for $\nu$-a.e. $x$ with  $g_k^{-1}x\in\text{Supp}(f)$,
\begin{align*}
\widetilde{\gamma}_{g_k,t_l-\log s_k}(f\otimes f')(x,t)&=f(g_k^{-1}x)f'(t-\log \frac{d\nu\circ g_k^{-1}}{d\nu}(x)-t_l+\log s_k) e^{-t_l+\log s_k}\\&=\beta_{g_k}(f)(x)\rho_{t_l}(f')(t), \ \text{ for }  (\nu\otimes \mu) \text{ - a.e. }(x,t)\in X\times\mathbb{R}.
\end{align*}
Hence,
$$\|f_j\otimes f'_j- \sum_{k=1}^m \sum_{j=1}^p \lambda_{j,k}\lambda'_{j,l}\widetilde{\gamma}_{g_k,t_l+\log s_k}(f\otimes f')\|\leq 3\varepsilon\|f_j\|\|f'_j\|=3\varepsilon\|f_j\otimes f'_j\|$$
and therefore the skew action $\tilde{\gamma}$ is AT. \end{proof}
\bigskip

By Lemma \ref{33} and Remark \ref{remk} if $\nu_n=\nu(L_n, \lambda_n)$ is a product probability measure on $X=\prod_{k\geq 1}\{0,1\}$ satisfying (\ref{cond}), then the natural action of $S_\infty$ on $(X,\nu)$ is $\widetilde{AT}$.

By \cite{CW1}, Remark 2.4 and Proposition \ref{skew} we then get:
\begin{theorem}\label{the}
Let $\nu=\nu(L_n, \lambda_n)$ be a product probability measure on $X=\prod_{k\geq 1}\{0,1\}$ satisfying (\ref{cond}). Then the associated flow of the natural action of $S_\infty$ on $(X,\nu)$ is AT.
\end{theorem}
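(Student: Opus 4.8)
The plan is to obtain Theorem~\ref{the} by chaining the three ingredients already assembled: the strong approximation Lemma~\ref{33}, Proposition~\ref{skew}, and the fact from \cite{CW1}, Remark~2.4, that every factor action of an AT action is AT. Since the genuine work --- the combinatorial estimates coming from Lemma~\ref{l1} and Lemma~\ref{l2} --- is already packaged inside Lemma~\ref{33}, the proof is essentially bookkeeping.

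First I would check that the natural action of $S_\infty$ on $(X,\nu)$ is $\widetilde{AT}$ in the sense of Definition~\ref{atilde}. The argument is the one used for Theorem~\ref{AT}: given $f_1,\dots,f_n\in L^1_+(X,\nu)$ and $\varepsilon>0$, choose $\eta>0$ with $(1+\|f_j\|)\eta\le\varepsilon$; use Lemma~\ref{l3} to find $n\geq 1$ and coefficients $\lambda_{j,x}\geq 0$ with $\|f_j\|=\sum_{x\in X^{k_n}}\lambda_{j,x}\|\chi_{C(x)}\|$ and $\|f_j-\sum_{x}\lambda_{j,x}\chi_{C(x)}\|\le\eta$; then apply Lemma~\ref{33} to produce $f\in L^1_+(X,\nu)$, a finite $S\subset S_\infty$ and constants $a_{x,\sigma}\ge 0$ with $\|\chi_{C(x)}-\sum_{\sigma\in S}a_{x,\sigma}\beta_\sigma(f)\|\le\eta\|\chi_{C(x)}\|$, and finish by the triangle inequality exactly as in the proof of Theorem~\ref{AT}. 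The one additional point is the Radon--Nikodym requirement of Definition~\ref{atilde}: by Remark~\ref{remk}, for every $\sigma\in S$ the derivative $\frac{d\nu\circ\sigma^{-1}}{d\nu}$ is constant --- with a positive value, say $s_\sigma$ --- on $\sigma(C(n,m))$, and $C(n,m)$ is precisely the support of the function $f$ constructed in Lemma~\ref{33}. Hence $\frac{d\nu\circ\sigma^{-1}}{d\nu}(x)=s_\sigma$ for $\nu$-a.e. $x\in\sigma(\mathrm{Supp}(f))$, which is exactly the missing condition, so the $S_\infty$-action is $\widetilde{AT}$.

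Next, Proposition~\ref{skew} applies with $G=S_\infty$: since the action is $\widetilde{AT}$, the skew product action $\widetilde{\gamma}$ of $S_\infty\times\mathbb{R}$ on $(X\times\mathbb{R},\ \nu\otimes\mu)$ with $d\mu=e^u\,du$ is AT. Finally, by construction (as recalled just above Definition~\ref{atilde}) the associated flow of the $S_\infty$-action on $(X,\nu)$ is the $\mathbb{R}$-action $\rho$ induced on the ergodic decomposition of the infinite-measure-preserving action $\gamma$, and this $\mathbb{R}$-action is a factor of the $(S_\infty\times\mathbb{R})$-action $\widetilde{\gamma}$. Applying \cite{CW1}, Remark~2.4, to this factor map shows that the associated flow is AT, which is the assertion of Theorem~\ref{the}.

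I do not anticipate a real obstacle. The only step deserving explicit care is the identification "associated flow $=$ $\mathbb{R}$-factor of $\widetilde{\gamma}$", i.e. that passing to the ergodic decomposition of $\gamma$ and then letting $\mathbb{R}$ act coincides with taking the $\mathbb{R}$-component of the commuting $(S_\infty\times\mathbb{R})$-action $\widetilde{\gamma}$; this is standard and was in fact the way the associated flow was introduced above, but it should be written out so that the citation to \cite{CW1}, Remark~2.4, can be invoked cleanly. Everything else is a transcription of the proof of Theorem~\ref{AT} together with the verification, via Remark~\ref{remk}, of the extra hypothesis in Definition~\ref{atilde}.
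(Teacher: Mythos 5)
Your proposal is correct and follows exactly the route the paper takes: Lemma~\ref{33} together with Remark~\ref{remk} (and the gluing argument from the proof of Theorem~\ref{AT}) to show the $S_\infty$-action is $\widetilde{AT}$, then Proposition~\ref{skew} to make the skew product AT, then \cite{CW1}, Remark~2.4, applied to the associated flow as a factor of $\widetilde{\gamma}$. The only difference is that you spell out the bookkeeping the paper leaves implicit.
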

\noindent The previous results can be generalized as follows:
\begin{proposition}\label{pro}
Let $\nu=\otimes_{k\geq 1}\nu_{k}$ be a product probability measure on $X=\prod_{k\geq 1}\{0,1\}$. We assume that there exists real numbers $\lambda_n\in (0,1]$, $n\geq 1$ and mutually disjoint sets of positive integers $J_n$, $n\geq 1$ of cardinality $L_n$ such that
$\nu_k=\nu_{\lambda_n}, \text{ for }k\in J_n$
and such that at least one of the $J_n$'s is infinite or
\begin{equation*}
\sup \ \{ L_{n}\lambda_{n} :  n\geq 1\}=\infty
\end{equation*}
if all $J_n$'s are finite.
Then the natural action of $S_\infty$ on $(X,\nu)$ is $\widetilde{AT}$ and the corresponding skew product action is AT.
\end{proposition}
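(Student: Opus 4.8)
The plan is to reduce Proposition \ref{pro} to Lemma \ref{33}. The statement that the skew product action is AT follows from the $\widetilde{AT}$ statement by Proposition \ref{skew}, so it suffices to show that the natural action of $S_\infty$ on $(X,\nu)$ is $\widetilde{AT}$. Arguing as in the proof of Theorem \ref{AT}, with Remark \ref{remk} supplying the Radon--Nikodym condition of Definition \ref{atilde}, and using Lemma \ref{l3} to reduce arbitrary functions in $L^1_+$ to finite nonnegative combinations of indicators of cylinders, this in turn reduces to the following local claim: for every $N\geq 1$ and $0<\varepsilon<1$ there exist $f\in L^1_+(X,\nu)$, a finite $S\subset S_\infty$, and constants $a_{x,\sigma}\geq 0$ ($x\in X^N$, $\sigma\in S$) with $\|\chi_{C(x)}-\sum_{\sigma\in S}a_{x,\sigma}\beta_\sigma(f)\|\leq\varepsilon\|\chi_{C(x)}\|$ for all $x\in X^N$, and with $d\nu\circ\sigma^{-1}/d\nu$ constant on $\sigma(\text{Supp}(f))$ for each $\sigma\in S$.

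The key observation is that the proof of Lemma \ref{33} is local: all it uses is one distinguished block of coordinates $B$, disjoint from and larger than $\{1,\dots,N\}$, all of whose $|B|$ coordinates carry a common Bernoulli weight $\lambda\in(0,1]$ with $|B|\lambda$ large (condition (\ref{30})). The remaining coordinates — in particular the first $N$, on which the cylinders $C(x)$ live — are only required to be Bernoulli coordinates; their weights enter solely through the finite positive factors $\nu_k(1)/\nu_k(0)$ in the Radon--Nikodym constant $D_x$, and the condition $\lambda_k\leq 1$ is never invoked for them (each appeal to Corollary \ref{cor} in the proof of Lemma \ref{33} involves only the weight $\lambda$ of the distinguished block). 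So I would first extract such a $B$ from the hypothesis: if some $J_{n_0}$ is infinite, take $B$ to be any finite subset of $J_{n_0}\setminus\{1,\dots,N\}$ with $|B|$ large enough that $3N(\pi(1+\lambda_{n_0})/(|B|\lambda_{n_0}))^{1/2}\leq\varepsilon$; if all $J_n$ are finite but $\sup_n L_n\lambda_n=\infty$, choose $n_0$ with $L_{n_0}\lambda_{n_0}$ large and put $B=J_{n_0}\setminus\{1,\dots,N\}$, noting that $|B|\lambda_{n_0}\geq L_{n_0}\lambda_{n_0}-N$ and $|B|\geq L_{n_0}-N$.

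Relabeling $\mathbb{N}$ by a bijection that fixes $\{1,\dots,N\}$ pointwise and maps $\{N+1,\dots,N+|B|\}$ onto $B$ conjugates the $S_\infty$-action into an isomorphic nonsingular action, hence does not affect $\widetilde{AT}$; after this we may assume $B=\{N+1,\dots,N+L\}$ with $L=|B|>N$ and common weight $\lambda$. Then the proof of Lemma \ref{33} applies verbatim with $B$ in the role of block $m$, $L$ in the role of $L_m$, $\lambda$ in the role of $\lambda_m$, and $N$ in the role of $k_n$: define $f$ as in (\ref{function}), $\sigma_x=\prod_{j\in x^{-1}(0)}(j,L+j)$, $S=\bigsqcup_{x\in X^N}S(L)\sigma_x$ with $S(L)$ the group of permutations of the block $\{N+1,\dots,N+L\}$, and $a_{x,\sigma}=\lambda^{k}/D_x$ for $\sigma\in S(L)\sigma_x$ (and $0$ otherwise), where $k$ is the number of $1$'s in $x$. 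Condition (\ref{30}) is exactly our choice of $B$, the estimates (\ref{301})--(\ref{rel2}) carry over unchanged and give the local claim, and the constancy of $d\nu\circ\sigma^{-1}/d\nu$ on $\sigma(\text{Supp}(f))$ is precisely the computation of Remark \ref{remk} (every permutation in $S(L)$ is $\nu$-preserving because all coordinates of $B$ carry the same weight). Proposition \ref{skew} then upgrades this to the AT statement for the skew product.

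I expect the only real work to be bookkeeping: verifying that $\sigma_x$ is still a product of disjoint transpositions carrying $\{1,\dots,N\}$ into $B$ (using $L>N$ and $B\cap\{1,\dots,N\}=\emptyset$), that $D_x$ and the $a_{x,\sigma}$ stay well-defined and positive when the coordinates outside $B$ have arbitrary Bernoulli weights, and that the two cases of the hypothesis indeed supply blocks $B$ with $|B|$ and $|B|\lambda$ arbitrarily large. None of these is a genuine obstacle; the substantive content is simply the observation that the hypothesis "some $J_n$ infinite or $\sup_n L_n\lambda_n=\infty$" is exactly what is needed to run the argument of Lemma \ref{33}.
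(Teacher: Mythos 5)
Your proposal is correct and follows exactly the route the paper intends: the paper states Proposition \ref{pro} without proof as a generalization of Theorem \ref{the}, relying on the reader to observe (as you do) that the argument of Lemma \ref{33} only uses a single block of coordinates with common weight $\lambda$ and $|B|\lambda$ large, together with Remark \ref{remk} and Proposition \ref{skew}. Your case analysis extracting such a block from either hypothesis, and the relabeling fixing $\{1,\dots,N\}$, supply precisely the bookkeeping the paper leaves implicit.
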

\begin{corollary}\label{c1}
Let $\nu=\otimes_{k\geq 1}\nu_{\lambda_k}$ be a product probability measure on $X=\prod\limits_{k\geq 1}\{0,1\}$. If the sequence $(\lambda_{k})_{k\geq 1}$ has a non zero limit point $\lambda$, then the action of $S_{\infty}$  on  $(X,\nu)$ and the action $\widetilde{\gamma}$ of $S_{\infty}\times \mathbb{R}$ on  $(X\times \mathbb{R},\nu\otimes e^udu)$ are AT.
\end{corollary}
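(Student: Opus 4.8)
The plan is to deduce Corollary~\ref{c1} from Proposition~\ref{pro} by replacing $\nu$ with an \emph{equivalent} product measure $\nu'$ that possesses an infinite block of coordinates on which it is constant. First I would use the hypothesis to pick a strictly increasing sequence $k_1<k_2<\cdots$ with $\lambda_{k_i}\to\lambda$; since $\lambda>0$, the Hellinger affinity $H(\nu_{\lambda_{k_i}},\nu_\lambda):=\sum_{\iota\in\{0,1\}}\sqrt{\nu_{\lambda_{k_i}}(\iota)\,\nu_\lambda(\iota)}$ tends to $1$, so after passing to a further subsequence I may assume $\sum_{i\geq1}\bigl(1-H(\nu_{\lambda_{k_i}},\nu_\lambda)\bigr)<\infty$. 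I then set $\nu'=\otimes_{k\geq1}\nu'_k$ with $\nu'_k=\nu_\lambda$ for $k\in K:=\{k_i:i\geq1\}$ and $\nu'_k=\nu_{\lambda_k}$ otherwise. All factors being fully supported on $\{0,1\}$ (the parameters lie in $(0,1]$) and the Hellinger deficiencies being summable, Kakutani's classical dichotomy for infinite product measures yields $\nu'\sim\nu$.

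Next I would check that $\nu'$ meets the hypotheses of Proposition~\ref{pro}. Take $J_1=K$: it is infinite and $\nu'_k=\nu_\lambda$ on it, with $\lambda\in(0,1]$. On $\mathbb{N}\setminus J_1$ the coordinates carry the original parameters $\lambda_k$, which take at most countably many values, so $\mathbb{N}\setminus J_1$ splits into the level sets $J_n$ ($n\geq2$) of $k\mapsto\lambda_k$. The sets $J_n$, $n\geq1$, are then mutually disjoint, cover $\mathbb{N}$, and $J_1$ is infinite, so Proposition~\ref{pro} applies to $\nu'$: the natural action of $S_\infty$ on $(X,\nu')$ is $\widetilde{AT}$, hence AT, and the associated skew product action $\widetilde{\gamma}$ of $S_\infty\times\mathbb{R}$ on $(X\times\mathbb{R},\nu'\otimes e^udu)$ is AT.

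Finally I would transfer these two conclusions from $\nu'$ to $\nu$ using $\nu\sim\nu'$. For the $S_\infty$-action this is immediate: with $b=\log\frac{d\nu'}{d\nu}$, multiplication by $e^{-b}$ is a positive $L^1$-isometry $L^1(X,\nu)\to L^1(X,\nu')$ that intertwines $\beta^{\nu}_\sigma$ with $\beta^{\nu'}_\sigma$ for every $\sigma\in S_\infty$, so the AT property is unchanged. For the skew product, the Radon--Nikodym cocycles of the $S_\infty$-action relative to $\nu$ and $\nu'$ differ by the coboundary $\sigma\mapsto b(\sigma x)-b(x)$; consequently the fibrewise translation $\Psi(x,t)=(x,t-b(x))$ is an $(S_\infty\times\mathbb{R})$-equivariant Borel automorphism of $X\times\mathbb{R}$ carrying $\widetilde{\gamma}$ for $\nu$ onto $\widetilde{\gamma}$ for $\nu'$, and a one-line change of variables gives $\Psi_*(\nu\otimes e^udu)=\nu'\otimes e^udu$; hence AT passes back. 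All the substantive content sits in Proposition~\ref{pro}, which is used as a black box; the only points needing (routine) care are the application of Kakutani's dichotomy to manufacture $\nu'$ with an infinite uniform block and the observation that AT — both for the group action and for the skew product — is an invariant of the measure class, which is where I expect the bookkeeping, rather than any genuine difficulty, to lie.
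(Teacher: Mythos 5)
Your proposal is correct and follows essentially the same route as the paper: replace $\nu$ on an infinite subsequence where $\lambda_k\to\lambda$ by the constant factor $\nu_\lambda$, invoke Kakutani's theorem to see the new product measure is equivalent to $\nu$, apply Proposition~\ref{pro} (with an infinite block $J_1$), and transfer AT back through the measure equivalence. The only differences are cosmetic — you phrase the Kakutani condition via Hellinger affinities where the paper uses $\sum_{j\in J}(\lambda_j-\lambda)^2<\infty$, and you spell out the (routine) invariance of AT under change to an equivalent measure, which the paper simply asserts.
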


\begin{proof} Since $(\lambda_k)_{k\geq 1}$ has a nonzero limit point $\lambda$ we can choose an infinite set $J$ such that $\sum_{j\in J}(\lambda_j-\lambda)^2<\infty$.
For $k\in J$, define the measure $\mu_k$ on $\{0,1\}$, by $\mu_k(0)=\frac{1}{1+\lambda}$ and $\mu_k(1)=\frac{\lambda}{1+\lambda}$. For $j\notin J$, let $\mu_k=\nu_k$. Consider the product measure $\mu=\otimes_{k\geq 1}\mu_k$. By Kakutani's theorem (see for example \cite{HS}, Theorem 22.36), the measures $\mu$ and $\nu$ are equivalent.
Proposition \ref{pro} implies that both the action of $S_\infty$ on $(X,\mu)$ and the action $\widetilde{\gamma}$ of $S_{\infty}\times \mathbb{R}$ on $(X\times \mathbb{R},\mu\otimes e^udu)$ are AT. Since AT property is preserved when the measure is replaced with an equivalent one, the corollary follows.
\end{proof}

\begin{remark}
In general, it is not known if for any product probability measure $\nu$ on $X=\prod_{k\geq 1}\{0,1\}$, the natural action of $S_\infty$ on $(X,\nu)$, or its associated flow are AT.
\end{remark}

\begin{remark}
On the product space $X=\prod_{k\geq 1}\{0,1\}$ consider the product probability measure $\nu=\otimes_{k\geq 1}\nu_{\lambda_k}$, where $\lambda_k=\frac{1}{k}$ for $k\geq 1$. We do not know if the system $(X,\nu, S_\infty)$ is AT. On the other hand, it can be shown that $(X,\nu, S_\infty)$ is of type III$_1$ and therefore its associated flow is AT.
\end{remark}

\section{Examples}
If $\nu$ is a Bernoulli measure on $X$, then $(X,\nu, S_\infty)$ is a system of type II$_1$. We have shown
in Section 2 that such a system is AT.

Recall that an ergodic and nonsingular action of a countable group $G$ on a Lebesgue space $(Y,\nu)$ is of type III if there is no $G$-invariant measure equivalent to $\nu$. Moreover, the ergodic $G$-space $(Y,\nu)$ is of type III$_\lambda$, $0\leq \lambda\leq 1$, if its associated flow is the periodic flow on the interval $[0,-\log \lambda]$ for $0<\lambda<1$, is the trivial flow on a singleton for $\lambda=1$, and is non-transitive for $\lambda=0$. Notice that the type of an ergodic measurable dynamical system depends only on its orbit equivalence class.

In this section we give examples of product measures on $X$ with respect to which the action of $S_\infty$ is AT and of type III.

\begin{example} 
\end{example}
\noindent For $0<\lambda<1$, let $\lambda_n=\lambda^{2^n}$, for $n\geq 0$ and $(L_n)_{n\geq 0}$ be a sequence of positive integers such that
$$\sup\{ L_n\lambda_n :  n\geq 0\}=\infty.$$
Let $\nu=\nu(L_n, \lambda_n)$ be the corresponding product measure on $X=\prod\limits_{k\geq 0}\{0,1\}$, as in Section 3. If $\mathcal{R}$ denotes the tail equivalence relation on $(X,\nu)$, then the equivalence $\mathcal{S}$ induced by the action of $\mathcal{S}_\infty$ is a subequivalence of $\mathcal{R}$.

By Theorem \ref{the}, $(X,\nu,S_\infty)$ is AT and of type III (see \cite{SV}, \cite{BP} or \cite{GM}). Moreover, as $(X,\nu,\mathcal{R})$ is of type III$_0$, then $(X,\nu,\mathcal{S}_\infty)$ is also of type III$_0$ (the associated flow of $(X,\nu,\mathcal{R})$ is a factor of the flow associated to $(X,\nu,\mathcal{S}_\infty)$).


\begin{example}
\end{example}
\noindent Let $0<\lambda<1$ be fixed and let $(k_n)_{n\geq 0}$ be an increasing sequence of positive integers with $k_0=0$ such that
$$\sum_{n\geq 1} (k_n-k_{n-1})\lambda^{2^n}=\infty.$$
Let $X=\prod_{k\geq 0}\{0,1\}$ and $\nu=\otimes_{k\geq 0} \nu_k$ be the probability measure defined by
\begin{align*}
\nu_{2k}(0)&=\frac{1}{1+\lambda}, \ \ \  \nu_{2k}(1)=\frac{\lambda}{1+\lambda},\ \  k\geq 0
\end{align*}
and
\begin{align*}
\nu_{2k+1}(0)&=\frac{1}{1+\lambda^{2^n+1}}, \ \ \ \nu_{2k+1}(1)=\frac{\lambda^{2^n+1}}{1+\lambda^{2^n+1}}, \  k_{n-1}\leq k< k_n, \  n\geq 1.
\end{align*}
The dynamical system $(X, \nu, S_\infty)$ is AT by Proposition \ref{pro}. Since $\sum_{n\geq 0} (k_n-k_{n-1})\lambda^{2^n}=\infty$, it follows from \cite{SV} or \cite{GM}, that $(X, \nu, S_\infty)$ is of type III.
Let $T(X,\nu,S_\infty)$ be the invariant $T$ of the ergodic system $(X,\nu,S_{\infty})$. Recall that $T(X,\nu,S_\infty)$ is equal to the Connes invariant $T$ of the Krieger factor associated to the system $(X, \mu, S_\infty)$ and to the $L^\infty$-point spectrum of the associated flow (see for example \cite{HO} or \cite{T}).

Then, following \cite{GM}, we have $$T(X,\nu, S_\infty)\supseteq \left\{\frac{2k\pi}{2^{n}\log \lambda}, n\geqslant 0, k\in \mathbb{Z}\right\},$$
which implies that $(X, \nu, S_\infty)$ is a system of type III$_0$. Note that in this case the odometer defined on $(X,\nu)$ is of type III$_\lambda$

This example shows that there exist product probability measures $\otimes_{k\geq 1}\nu_k$ on $X$ such that the corresponding sequence $(\lambda_k)_{k\geq 1}$ has a nonzero limit point and such that $(X, \nu, S_\infty)$ is AT and of type III$_0$.

\subsection*{Acknowledgement. }T. Giordano was partially supported by NSERC Discovery Grant. R. B. Munteanu was supported by the grants of the Romanian Ministry of Education, CNCS - UEFISCDI, project number
PN-II-RU-PD-2012-3-0533 and project number PN-II-RU-TE-2014-4-0669.

\bigskip

\end{document}